\newtheorem{theorem}{Theorem}[section]        
\newtheorem{lemma}[theorem]{Lemma}
\newtheorem{conjecture}[theorem]{Conjecture}
\newtheorem*{main theorem}{Main Theorem}
\theoremstyle{remark}      
\theoremstyle{definition}  
\newtheorem{definition}[theorem]{Definition}   
\def\N{\mathbb{N}}
\def\R{\mathbb{R}}     
\def\Z{\mathbb{Z}}
\def\norm#1{\|#1\|}
\def\implies{\Longrightarrow}
\begin{document}
\title{Sets in $\R^d$ determining $k$ taxicab distances} 
\author{Vajresh Balaji, \quad Olivia Edwards, \quad Anne Marie Loftin, \quad Solomon Mcharo, \\ Lo Phillips,  \quad Alex Rice, \quad Bineyam Tsegaye}
 
\begin{abstract} We address an analog of a problem introduced by Erd\H{o}s and Fishburn, itself an inverse formulation of the famous Erd\H{o}s distance problem, in which the usual Euclidean distance is replaced with the metric induced by the $\ell^1$-norm, commonly referred to as the \textit{taxicab metric}. Specifically, we investigate the following question: given $d,k\in \N$, what is the maximum size of a subset of $\R^d$ that determines at most $k$ distinct taxicab distances, and can all such optimal arrangements be classified? We completely resolve the question in dimension $d=2$, as well as the $k=1$ case in dimension $d=3$, and we also provide a full resolution in the general case under an additional hypothesis.  \

\end{abstract}

\address{Department of Mathematics, Millsaps College, Jackson, MS 39210}
\email{balajv@millsaps.edu} 
\email{edwarof@millsaps.edu} 
\email{loftiam@millsaps.edu}
\email{mcharsk@millsaps.edu}   
 \email{philllg@millsaps.edu} 
\email{riceaj@millsaps.edu} 
\email{tsegabl@millsaps.edu} 

\maketitle 
\setlength{\parskip}{5pt}   

\section{Introduction}

In 1946, Erd\H{o}s \cite{Erdos} asked a now famous question: given $n\in \N$, what is the minimum number of distinct distances determined by $n$ points in a plane? Denoting this minimum by $f(n)$, he proved via an elementary counting argument that $f(n)=\Omega(\sqrt{n})$, and he conjectured that the correct order of growth is $n/\sqrt{\log n}$, as attained by a $\sqrt{n}\times\sqrt{n}$ integer grid. After decades of incremental progress, this conjecture was effectively resolved in a celebrated result of Guth and Katz \cite{GuthKatz}, who established that $f(n)=\Omega(n/\log n)$.

50 years after Erd\H{o}s's original paper, Erd\H{o}s and Fishburn \cite{EF} addressed the same question from the inverse perspective, and aspired to precise results in fixed cases rather than general asymptotic results. Specifically, they investigated the following: given $k\in \N$, what is the maximum number of points in a plane that determine at most $k$ distinct distances, and can such optimal arrangements be classified? This question, which we refer to as the \textit{Erd\H{o}s-Fishburn problem}, was fully resolved by Erd\H{o}s and Fishburn for $1\leq k \leq 4$, then by Shinahara \cite{Shin} for $k=5$, and Wei \cite{Wei} for $k=6$, while it remains open for $k\geq 7.$ By convention, in the quoted results and throughout this paper, $0$ is not counted as a distance determined by a set of points.

These questions can also be adapted to higher dimensions, and to alternative notions of distance. Here we focus on a particular, well-known alternative metric.
\begin{definition} For $d\in \N$ and $x=(x_1,\dots,x_d)\in \R^d$, we define the \textit{$\ell^1$-norm} of $x$ by $$\norm{x}_1=|x_1|+\cdots+|x_d|,$$ which in particular satisfies the \textit{triangle inequality} $\norm{x+y}_1\leq \norm{x}_1+\norm{y}_1$. Like every norm, the $\ell^1$-norm induces a metric on $\R^d$ by defining $\norm{x-y}_1$ to be the \textit{$\ell^1$-distance} between $x,y\in \R^d.$ 
\end{definition}
The metric induced by the $\ell^1$-norm is commonly referred to as the \textit{taxicab metric}, because it measures the length of the shortest path between two points in space, under the restriction that one can only travel in directions parallel to the coordinate axes, as if in a taxicab on a grid of city streets. For example, if two people at city intersections are separated by 3 blocks horizontally and 4 blocks vertically, then, as the crow flies, they are 5 blocks apart by the Pythagorean theorem. However, to actually make the journey without cutting through buildings, they must walk 7 blocks, which is the $\ell^1$-distance.

As noted in Chapters 0 and 1 of \cite{ErdosBook}, one can show that the minimum number of $\ell^1$-distances determined by $n$ points in $\R^d$ is $\Omega(n^{1/d})$, and this order of growth is attained by $\{1,2,3,\dots,\lceil n^{1/d}\rceil\}^d$. Therefore, in the case of the taxicab metric, the big picture asymptotic question is immediately resolved, which begs the question of whether this case can be analyzed more precisely. To begin this journey, we first consider the Erd\H{o}s-Fishburn problem in the plane with $k=1$. 

We fix two points $U,V\in \R^2$, say $U=(-1,0)$ and $V=(1,0)$. With the usual notion of distance, if any additional points can be added without determining an additional distance, those points necessarily lie on the circles of radius $2$ centered at $U$ and $V$, respectively. Those two circles intersect in only two points, and we find that they are $Q=(0,\sqrt{3})$ and $R=(0,-\sqrt{3})$. Since the distance between $Q$ and $R$ is greater than $2$, only one of the two can be added to $\{U,V\}$ while maintaining only a single distance. In summary, a set $P\subseteq R^2$ determining a single distance satisfies $|P|\leq 3$, and equality holds if and only if $P$ is the set of vertices of an equilateral triangle. 

However, even in this simplest case, the taxicab metric case diverges from that of the usual distance. With the taxicab metric, the ``circle" (which we refer to as an \textit{$\ell^1$-circle}) of radius $2$ centered at $U$ is in fact a square, rotated $45^{\circ}$ from axis-parallel, with the four sides connecting the points $(-3,0)$, $(-1,2)$, $(1,0)$, and $(-1,-2)$. Similarly, the $\ell^1$-circle of radius $2$ centered at $V$ is a square with sides connecting $(3,0)$, $(1,-2)$, $(-1,0)$, and $(1,2)$. Like the usual distance case, these two circles intersect in exactly two points, this time $Q=(0,1)$ and $R=(0,-1)$. The difference is that here $Q$ and $R$ are indeed separated by $\ell^1$-distance $2$, and hence the four-point configuration $\{U,V,Q,R\}$ determines a single $\ell^1$-distance.

\section{Main Definition and Results} \label{mainres}

Inspired by the four-point construction above, as well as additional trial and error, we define the following family of sets, which serve as our candidates for resolving the Erd\H{o}s-Fishburn problem for the taxicab metric. 

\begin{definition}For integers $d>0$ and $k\geq 0$, we define $$\Lambda_d(k)=\left\{ n=(n_1,n_2,\dots,n_d)\in \Z^d: \norm{n}_1\leq k, \ n_1+\cdots+n_d \equiv k \ (\text{mod }2)\right\}. $$ 

\end{definition}

\noindent $\Lambda_d(k)$ is the union of the integer lattice points lying on the $\ell^1$-spheres (which in dimension $d$ are $2^d$-faced polytopes) centered at the origin of \textit{every other} integer radius, starting with either $0$ or $1$ depending on the parity of $k$. The four-point configuration discussed in the introduction is $\Lambda_2(1)$, and some additional examples are pictured below.

\begin{figure}[H]
\centering
\begin{subfigure}{.5\textwidth}
  \centering
  \includegraphics[width=.5\linewidth]{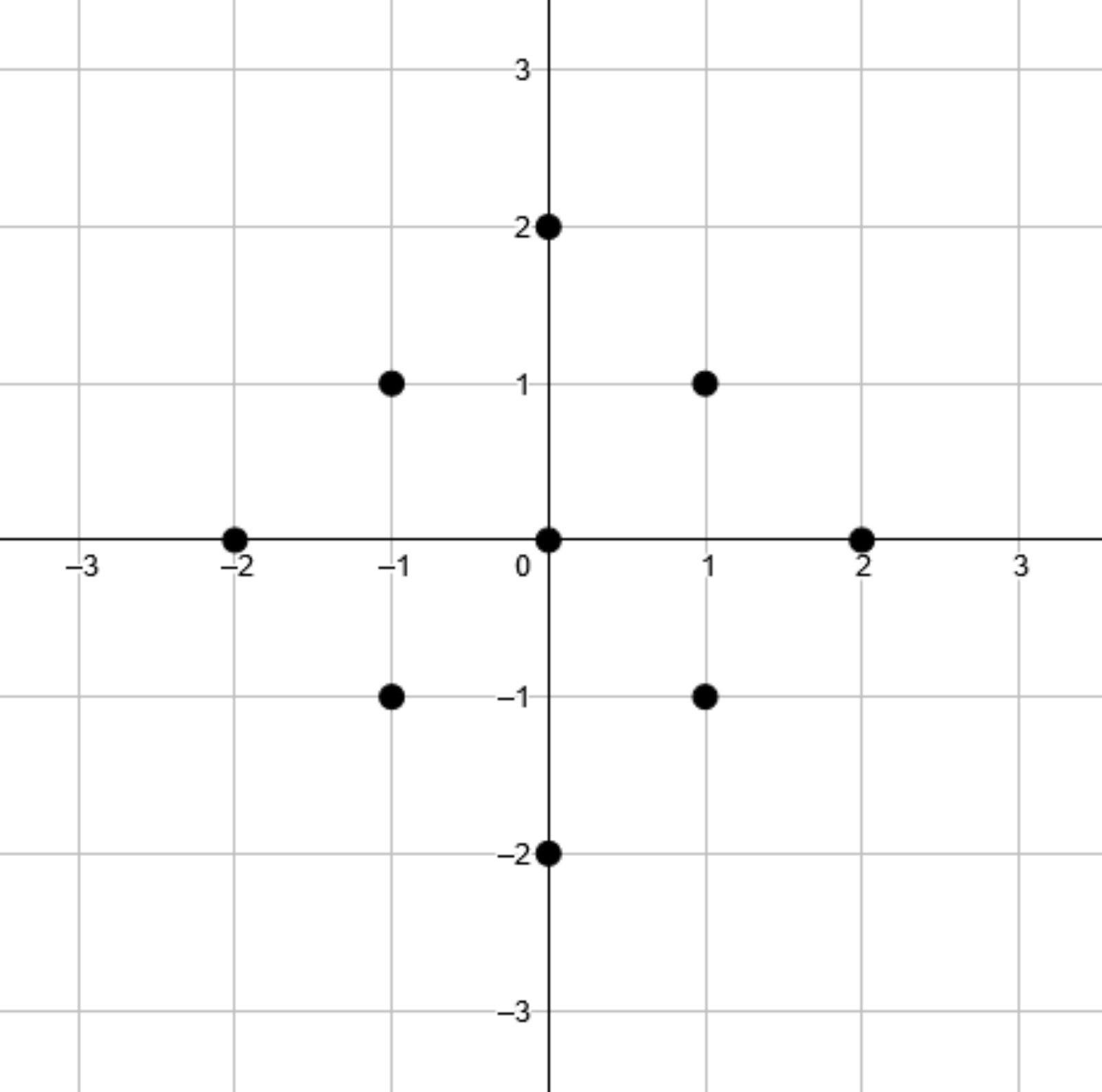}
  \caption{$\Lambda_2(2)$: $9$ points in $\R^2$ determining two $\ell^1$-distances}
  \label{fig:sub1}
\end{subfigure}%
\begin{subfigure}{.5\textwidth}
  \centering
  \includegraphics[width=.5\linewidth]{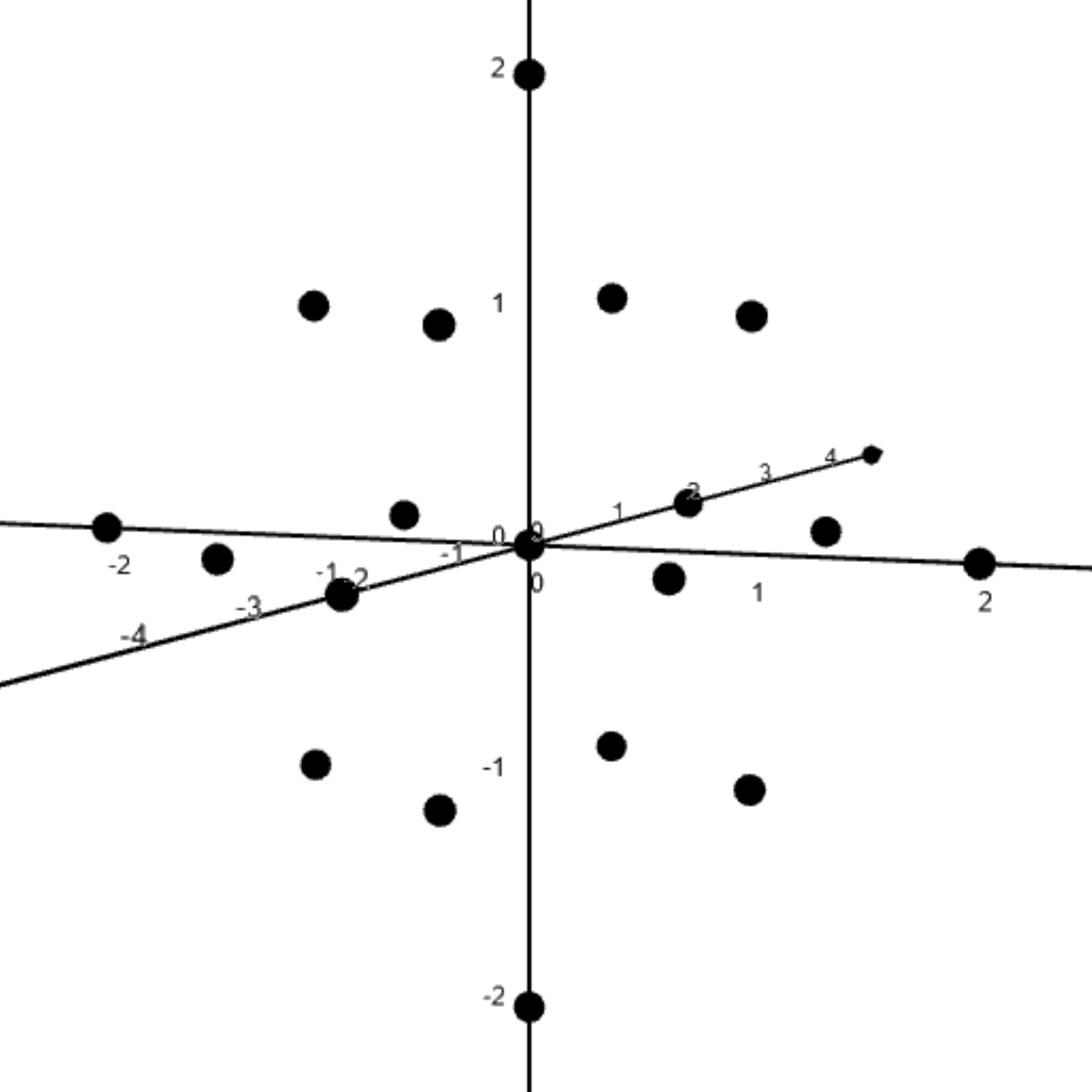}
  \caption{$\Lambda_3(2)$: $19$ points in $\R^3$ determining two $\ell^1$-distances}
  \label{fig:sub2}
\end{subfigure}
\caption{}\label{fig:test}  
\end{figure}

%\noindent \textbf{Pictures of $\Lambda_d(k)$ examples needed somewhere, maybe $d=2$, $k=4$ or $5$, and $d=3$, $k=2$ or $3$}

\noindent In Section \ref{propsec}, we establish the following properties of $\Lambda_d(k)$, including the crucial fact that it determines exactly $k$ distinct $\ell^1$-distances, the primary motivation for its definition. 

\begin{theorem} \label{prop} The following hold for all $d,k\in \N$: \begin{enumerate}[(i)] \item \label{2k} $\Lambda_d(k)$ determines exactly $k$ distinct $\ell^1$-distances, specifically $2,4,\dots, 2k$ \\  \item \label{start} $|\Lambda_1(k)|=k+1$, $|\Lambda_d(1)|=2d$ \\ \item \label{recur} $\displaystyle{|\Lambda_{d+1}(k)|=|\Lambda_{d}(k)|+2\sum_{j=0}^{k-1} |\Lambda_{d}(j)|}$ \end{enumerate}
\end{theorem}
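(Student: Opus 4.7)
The plan is to handle all three parts by direct combinatorial arguments, exploiting the parity and norm constraints built into the definition of $\Lambda_d(k)$.

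For (i), the key observation is that $|a - b| \equiv a - b \pmod{2}$ for all integers $a, b$. Given distinct $x, y \in \Lambda_d(k)$, this yields
$$\|x-y\|_1 = \sum_{i=1}^d |x_i - y_i| \equiv \sum_{i=1}^d (x_i - y_i) \equiv k - k \equiv 0 \pmod{2},$$
while the triangle inequality gives $\|x-y\|_1 \leq \|x\|_1 + \|y\|_1 \leq 2k$. So every distance determined by $\Lambda_d(k)$ is a positive even integer at most $2k$, hence lies in $\{2, 4, \dots, 2k\}$. To verify that every value in this set is actually attained, I would exhibit the explicit pair $(k, 0, \dots, 0)$ and $(k - 2j, 0, \dots, 0)$ for each $j \in \{1, \dots, k\}$: both points lie in $\Lambda_d(k)$ by inspection, and their $\ell^1$-distance is exactly $2j$.

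For (ii), $\Lambda_1(k)$ is the set of integers in $[-k, k]$ of the same parity as $k$, which is straightforwardly enumerated as $k+1$ elements. Meanwhile, $\Lambda_d(1)$ consists of the $2d$ standard basis vectors $\pm e_1, \dots, \pm e_d$, since the parity constraint combined with $\|n\|_1 \leq 1$ forces $\|n\|_1 = 1$ (ruling out the origin).

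For (iii), I would stratify $\Lambda_{d+1}(k)$ according to the value of the last coordinate. For fixed $m \in \Z$ with $|m| \leq k$, the $m$-slice $\{n' \in \Z^d : (n', m) \in \Lambda_{d+1}(k)\}$ consists of those $n'$ with $\|n'\|_1 \leq k - |m|$ and $\sum_i n'_i \equiv k - m \pmod{2}$. Because $m \equiv |m| \pmod{2}$, the parity condition is equivalent to $\sum_i n'_i \equiv k - |m| \pmod{2}$, so the slice is exactly $\Lambda_d(k - |m|)$. Summing the slice sizes, with the $m = 0$ term contributing $|\Lambda_d(k)|$ and each pair $\{m, -m\}$ for $1 \leq m \leq k$ contributing $2|\Lambda_d(k - m)|$, yields the claimed recursion after reindexing $j = k - m$.

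I do not foresee any serious technical obstacle in the three parts, as the arguments are essentially bookkeeping. The main subtlety sits in (iii), where I must justify the parity identification $k - m \equiv k - |m| \pmod{2}$ and carefully pair the $\pm m$ slices to produce the factor of $2$ in front of the sum; conflating these, or forgetting that the $m=0$ term is unpaired, would spoil the formula.
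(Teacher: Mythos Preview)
Your proposal is correct and follows essentially the same approach as the paper: the parity and triangle-inequality argument in (i), the explicit enumeration in (ii), and the slicing by the last coordinate in (iii) are all identical in spirit to the paper's proof. The only noticeable difference is your witness pair in (i)---using $(k,0,\dots,0)$ and $(k-2j,0,\dots,0)$ instead of the paper's $(\pm j,0,\dots,0)$ or $(\pm j,1,0,\dots,0)$ depending on the parity of $j$---which is a mild streamlining that avoids a case split but is not a substantive departure.
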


\noindent Parts (\ref{start}) and (\ref{recur}) of Theorem \ref{prop}, combined with known formulas for sums of powers, allow one to determine explicit formulas for $|\Lambda_d(k)|$ for any fixed $d\in \N$. We include the first few examples in the following table:  

\begin{center}
\begin{table}[H] 
\caption{Explicit Formulas for $|\Lambda_d(k)|$}
\renewcommand{\arraystretch}{1.3}
\begin{tabular}{||c | c ||} 

\hline
$d$ & $|\Lambda_d(k)|$ \\
\hline\hline
$2$ & $(k+1)^2$\\
\hline 
$3$ &  $\frac{2}{3}(k+1)^3+\frac{1}{3}(k+1)$\\
\hline 
$4$ &  $\frac{1}{3}(k+1)^4+\frac{2}{3}(k+1)^2$ \\
\hline
$5$ & $\frac{2}{15}(k+1)^5+\frac{2}{3}(k+1)^3+\frac{1}{5}(k+1) $ \\
\hline 
$6$ & $ \frac{2}{45}(k+1)^6+\frac{4}{9}(k+1)^4+\frac{23}{45}(k+1)^2 $ \\
\hline
$7$ & $  \frac{4}{315}(k+1)^7+\frac{2}{9}(k+1)^5 + \frac{28}{45}(k+1)^3 + \frac{1}{7}(k+1) $\\
\hline
\end{tabular}
\label{one}
\end{table} 
\end{center}

\noindent Some of the patterns observed in Table \ref{one} can be generalized using Faulhaber's Formula for sums of powers, as seen in the following formulation, which we also prove in Section \ref{propsec}. 

\begin{theorem}\label{ffthm} For each $d\in \N$ and each integer $k\geq 0$, we have the formula \begin{equation*}|\Lambda_d(k)|=\sum_{i=0}^{\lceil d/2 \rceil-1} a_{d,i}(k+1)^{d-2i},\end{equation*} where the coefficients $a_{d,i}$ satisfy the recursive formula \begin{equation*} a_{d,i}=2\sum_{\ell=0}^i \frac{a_{d-1,\ell}}{d-2\ell} {d-2\ell \choose 2(i-\ell)}B_{2(i-\ell)}, \end{equation*} where $B_i$ is the $i$-th Bernoulli number. In particular, we have the explicit formulas $a_{d,0}=2^{d-1}/d! $ for all $d\in \N$ and $a_{d,1}=2^{d-3}/(3(d-3)!)$ for all $d\geq 3$. 
\end{theorem}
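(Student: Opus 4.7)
The plan is to induct on $d$, using the recursion $|\Lambda_{d+1}(k)|=|\Lambda_d(k)|+2\sum_{j=0}^{k-1}|\Lambda_d(j)|$ from part (\ref{recur}) of Theorem \ref{prop} together with Faulhaber's formula. The base case $d=1$ is immediate from part (\ref{start}), which gives $|\Lambda_1(k)|=k+1$, matching the target formula with $a_{1,0}=1=2^0/1!$. For the inductive step I would substitute the hypothesized expansion for $|\Lambda_d(k)|$ into the recursion, interchange summations, and apply the Faulhaber identity
\[\sum_{m=1}^k m^p = \frac{1}{p+1}\sum_{q=0}^p \binom{p+1}{q}B_q (k+1)^{p+1-q}\]
(valid for $p\geq 1$ under the convention $B_1=-1/2$) to each inner sum.

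The crucial observation is that the $q=1$ contribution of $2\sum_{m=1}^k m^{d-2i}$ equals exactly $-(k+1)^{d-2i}$, which cancels the corresponding term of $|\Lambda_d(k)|$ itself. Since $B_q=0$ for all odd $q\geq 3$, only the even-indexed Bernoulli numbers survive. Setting $q=2j$ and regrouping by $i'=i+j$ to collect coefficients of a fixed power $(k+1)^{(d+1)-2i'}$ yields exactly the claimed recursion for $a_{d+1,i'}$, while tracking the maximum of $i'=\ell+j$ (over $\ell$ in the inductive range and $j\leq\lfloor(d-2\ell)/2\rfloor$) confirms that the polynomial degree truncates at $\lceil(d+1)/2\rceil-1$, with the parity-dependent lowest power coming out to $1$ or $2$ as required.

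The closed forms for $a_{d,0}$ and $a_{d,1}$ then follow from further elementary inductions using the recursion just established. Only $\ell=0$ contributes to $a_{d+1,0}$, giving $a_{d+1,0}=2a_{d,0}/(d+1)$, which unrolls from $a_{1,0}=1$ to $a_{d,0}=2^{d-1}/d!$. For $a_{d,1}$ (with $d\geq 3$), only $\ell\in\{0,1\}$ contribute; substituting $B_2=1/6$ and the already-established value of $a_{d,0}$ reduces the recursion to $a_{d+1,1}=\frac{2^{d-1}}{6(d-1)!}+\frac{2a_{d,1}}{d-1}$, after which induction from the base value $a_{3,1}=1/3$ (read off Table \ref{one}) confirms $a_{d,1}=2^{d-3}/(3(d-3)!)$.

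I expect the main obstacle to be purely bookkeeping: aligning signs, index ranges, and binomial coefficients across the re-indexing $i'=\ell+j$, and verifying the $q=1$ cancellation is exact so that only even Bernoulli numbers appear in the final recursion. No ideas beyond direct manipulation of the recursion seem to be required.
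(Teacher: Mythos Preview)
Your proposal is correct and follows essentially the same route as the paper: induct on the dimension via the recursion of Theorem~\ref{prop}(\ref{recur}), apply Faulhaber's formula, observe that the linear-in-Bernoulli ($q=1$) contribution exactly cancels the extra $|\Lambda_d(k)|$ term so that only even Bernoulli numbers survive, and then read off the recursion for the $a_{d,i}$ and solve the $i=0,1$ cases by a second induction. The only cosmetic differences are that the paper indexes the step as $d-1\to d$ (rewriting the recursion as $2\sum_{j=0}^k|\Lambda_{d-1}(j)|-|\Lambda_{d-1}(k)|$) and phrases the cancellation via the $n^p/2$ term of their form of Faulhaber rather than the $B_1=-\tfrac12$ term of yours.
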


\noindent Detailed analysis of $\Lambda_d(k)$ is perhaps of independent interest, but to make headway toward our goal, we need to address the important questions: does $\Lambda_d(k)$ have maximal size amongst subsets of $\R^d$ determining at most $k$ distinct $\ell^1$-distances? If so, is $\Lambda_d(k)$ the only such optimal arrangement? In anticipation of the latter question, we observe that any optimal arrangement can undergo any scaling, or any transformation that preserves the $\ell^1$-norm, and remain optimal, leading to the following definition.

%\noindent \textbf{Can we do better? We also have the pattern $a_{d,(d-1)/2}=1/d$ for odd $d$, and we know that the coefficients for each $d$ add up to 1\dots

\begin{definition} \label{simdef} For $d\in \N$, we say that two subsets of $\R^d$ are \textit{$\ell^1$-similar} if one can be mapped to the other via a composition of translations, reflections about coordinate hyperplanes, dilations, and coordinate permutations, as these transformations either preserve or uniformly scale collections of $\ell^1$-distances.
\end{definition}

\noindent  We note that the list of transformations in Definition \ref{simdef} does not include rotations, because, unlike the usual Euclidean metric, the taxicab metric is \textit{not} invariant under rotation, unless the rotation can alternatively be obtained through reflection about coordinate hyperplanes and permutation of coordinates. This fact rears its head in our exploration of the taxicab metric in higher dimensions, and plays a key role in our discussions in Section \ref{hdsec}.  For now, though, the following result established in Section \ref{2dsec} completely resolves the taxicab analog of the Erd\H{o}s-Fishburn problem in the plane.

\begin{theorem} \label{opt} If $k\in \N$ and $P\subseteq \R^2$ determines at most $k$ distinct $\ell^1$-distances, then $|P|\leq (k+1)^2$. Further, $|P|=(k+1)^2$ if and only if $P$ is $\ell^1$-similar to $\Lambda_2(k)$. 

\end{theorem}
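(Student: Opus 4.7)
The plan is to first reduce the problem to an equivalent statement in the $\ell^\infty$-metric via the linear bijection $\phi:\R^2\to\R^2$ given by $\phi(x,y)=(x+y,x-y)$. Using the elementary identity $\max(|a+b|,|a-b|)=|a|+|b|$, one verifies $\norm{\phi(p)-\phi(q)}_\infty=\norm{p-q}_1$, so $P$ determines at most $k$ distinct $\ell^1$-distances if and only if $\phi(P)$ determines at most $k$ distinct $\ell^\infty$-distances, with $|P|=|\phi(P)|$. Moreover, $\phi(\Lambda_2(k))=\set{-k,-k+2,\dots,k}^2$ is $\ell^\infty$-similar to $\set{0,1,\dots,k}^2$, and since $\phi$ conjugates the isometry group of $\ell^1$ onto that of $\ell^\infty$, $\ell^1$-similarity in the domain corresponds under $\phi$ to the analogous notion of $\ell^\infty$-similarity in the codomain. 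Thus it suffices to prove: every $Q\subseteq\R^2$ determining at most $k$ distinct $\ell^\infty$-distances has $|Q|\le(k+1)^2$, with equality only if $Q$ is $\ell^\infty$-similar to $\set{0,1,\dots,k}^2$.

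I would prove the $\ell^\infty$ claim by induction on $k$, the case $k=0$ being trivial; I may assume $Q$ realizes exactly $k$ distinct distances, else the hypothesis applies to $Q$ directly. Let $x_L<x_R$ and $y_B<y_T$ be the extreme coordinates of $Q$, set $W=x_R-x_L$ and $H=y_T-y_B$, and after reflecting if needed assume $W\ge H$, so $W$ equals both the $\ell^\infty$-diameter and the largest realized distance $d_k$. Let $Q_L=Q\cap(\set{x_L}\times\R)$ be the leftmost column, $Q_T=Q\cap(\R\times\set{y_T})$ the topmost row, and $Q'=Q\setminus(Q_L\cup Q_T)$. Every pair in $Q'$ has $x$-difference strictly less than $W$ and $y$-difference strictly less than $H\le W$, so no pair in $Q'$ achieves $d_k$, and $Q'$ determines at most $k-1$ distinct distances. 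By induction, $|Q'|\le k^2$.

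Next I bound $|Q_L\cup Q_T|\le 2k+1$. Because $Q_L,Q_T$ are collinear subsets of $Q$, and any $n$ points on a line determine at least $n-1$ distinct distances, $|Q_L|,|Q_T|\le k+1$. If either is at most $k$, then trivially $|Q_L|+|Q_T|\le 2k+1$. Otherwise $|Q_T|=k+1$ forces $Q_T$ to realize all $k$ distances of $Q$, in particular $d_k=W$, so $Q_T$ spans the full $x$-range and $(x_L,y_T)\in Q_T\cap Q_L$, giving $|Q_L\cap Q_T|\ge 1$ and $|Q_L\cup Q_T|\le(k+1)+(k+1)-1=2k+1$ (and symmetrically if $|Q_L|=k+1$). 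Combining, $|Q|\le k^2+(2k+1)=(k+1)^2$.

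For the equality characterization, $|Q|=(k+1)^2$ forces both $|Q'|=k^2$ and $|Q_L\cup Q_T|=2k+1$. A short case check on the last step shows the latter requires $|Q_L|=|Q_T|=k+1$, $W=H$, and $Q_L\cap Q_T=\set{(x_L,y_T)}$, with $Q_L,Q_T$ arithmetic progressions of common difference $s:=W/k$ (since $k+1$ collinear points determining only $k$ distances must be in AP). By induction, $Q'$ is a regular $k\times k$ grid of some spacing $s'$. The hard part will be dovetailing $Q'$ with $Q_L\cup Q_T$ into the full grid: the span constraint $(k-1)s'<ks$ combined with $s'$ being a distance of $Q$ should force $s'=s$, and computing $\ell^\infty$-distances from the extreme corner of $Q'$ to the endpoints of $Q_L$ should pin down the offset of $Q'$, completing the identification of $Q$ with the regular $(k+1)\times(k+1)$ grid.
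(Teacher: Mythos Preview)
Your proposal is correct and follows essentially the same strategy as the paper: induct on $k$, peel off an ``L-shaped'' set of at most $2k+1$ points so that the remainder determines at most $k-1$ distances, then combine with the inductive hypothesis; the equality case is handled by forcing both pieces to be arithmetic progressions and a smaller grid, respectively, and then checking they dovetail. The paper carries this out directly in the $\ell^1$ setting (via Lemmas~\ref{linfty}--\ref{semicirc}), while you first pass through the isometry $\phi(x,y)=(x+y,x-y)$ to the $\ell^\infty$ metric and work there---an alternative the paper explicitly mentions in the Remark following Lemma~\ref{linfty}. Your decomposition is in fact marginally more direct: by removing the extremal column $Q_L$ and row $Q_T$ of $Q$ itself, you avoid the need for an enclosing-ball lemma (the paper's Lemma~\ref{SD}), since any two points of $Q'=Q\setminus(Q_L\cup Q_T)$ automatically have both coordinate differences strictly below $W$. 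Both write-ups leave the final ``dovetailing'' step in the uniqueness argument at the level of a sketch; your outline (force $s'=s$ via $(k-1)s'<ks$ together with $s'\in\{s,2s,\dots,ks\}$, then pin down the offset by measuring $\ell^\infty$-distances from corners of $Q'$ to corners of $Q_L\cup Q_T$) is sound and fills in straightforwardly.
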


\noindent As we discuss in Section \ref{2dsec}, the $d=2$ case is simplified by the fact that, for the purposes of analyzing distance sets, the $\ell^1$-norm in $\R^2$ is effectively the same as the $\ell^{\infty}$-norm defined by $\norm{(x,y)}_{\infty}=\max\{|x|,|y|\}$. However, this equivalence does not persist in dimension $d\geq 3$, and for this reason, our proof strategy does not immediately generalize to higher dimensions. (Although, for the interested reader, the proof does generalize to show that if $P\subseteq \R^d$ determines at most $k$ distinct $\ell^{\infty}$-distances, then $|P|\leq (k+1)^d$, and equality holds if and only if $P$ is $\ell^1$-similar to $\{0,1,2,\dots,k\}^d$.)

\noindent With considerable additional effort, we successfully get our foot into the higher-dimensional door in Section \ref{3dsec}, which assures us that the unique optimality of $\Lambda_d(k)$ is not completely dependent on a connection to the $\ell^{\infty}$-norm.

\begin{theorem}\label{singdist} If $P\subseteq \R^3$ determines a single $\ell^1$-distance, then $|P|\leq 6$. Further, $|P|=6$ if and only if $P$ is $\ell^1$-similar to $\Lambda_3(1)$.
 
\end{theorem}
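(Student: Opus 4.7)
The plan is to normalize by an $\ell^1$-similarity (translation and dilation) so that $0 \in P$ and the common distance is $2$. Then $Q := P \setminus \{0\}$ lies on the $\ell^1$-sphere $S = \{x : \|x\|_1 = 2\}$ (the octahedron with vertices $\pm 2 e_i$), with all pairs at $\ell^1$-distance $2$. It suffices to show $|Q| \le 5$, with equality forcing $P$ to be $\ell^1$-similar to $\Lambda_3(1)$. Throughout, I would exploit the identity
\[
\|p - q\|_1 = \|p\|_1 + \|q\|_1 - 2\sum_{i=1}^3 \min(|p_i|, |q_i|) \, \mathbf{1}[p_i q_i > 0],
\]
which translates equidistance into a sharp ``overlap'' constraint on $p$ and $q$.

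First I would split on whether $Q$ contains a vertex of $S$. If it does, by $\ell^1$-similarity take $2e_1 \in Q$; for any other $q' \in Q$, combining $\|q'\|_1 = 2$ with $\|q' - 2e_1\|_1 = 2$ forces $q'_1 = 1$ and $|q'_2| + |q'_3| = 1$. Hence the remaining points of $Q$ lie on a 2D $\ell^1$-circle of radius $1$ in the hyperplane $\{x_1 = 1\}$, pairwise at $\ell^1$-distance $2$, which is the diameter of that circle. By the 2D case of Theorem~\ref{opt}, at most $4$ such points exist, realized uniquely (up to $\ell^1$-similarity) by a 2D analog of $\Lambda_2(1)$. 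This gives $|Q| \le 5$, with equality forcing $P = \{0, 2e_1, (1, \pm 1, 0), (1, 0, \pm 1)\}$, a translate of $\Lambda_3(1)$. In the complementary case, $Q$ contains no vertex, every point has support $\ge 2$, and I aim to show $|Q| \le 4$, thereby ruling out $|P| = 6$.

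To handle the vertex-free case, I would use that for each $\epsilon \in \{\pm 1\}^3$, the linear functional $s_\epsilon(x) = \sum_i \epsilon_i x_i$ satisfies $|s_\epsilon(p) - s_\epsilon(q)| \le \|p - q\|_1 = 2$ on $Q$, while $s_\epsilon$ attains the value $2$ precisely on the closed face $\bar F_\epsilon$ of $S$. Hence $Q$ cannot meet both $\bar F_\epsilon$ and the antipodal face $\bar F_{-\epsilon}$, confining $Q$ to the union of at most $4$ of the $8$ faces. On any face, $\|\cdot\|_1 = 2\|\cdot\|_\infty$ when restricted to the associated simplex, so each face hosts at most $3$ points pairwise at $\ell^1$-distance $2$. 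Combining this with sign-pattern analysis for support-$2$ (edge) points, which lie on two adjacent chosen faces, and with cross-face distance constraints forces $|Q| \le 4$. The main obstacle is precisely this vertex-free case: the bound does not follow from a single clean inequality, and one must enumerate the possible distributions of $5$ candidate points across the (up to $4$) chosen faces and rule each out via the $\ell^1$-distance constraint---this is the ``considerable additional effort'' the statement alludes to.
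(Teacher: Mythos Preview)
Your plan is structurally parallel to the paper's: translate one point of $P$ to the origin so the rest of $P$ sits on the $\ell^1$-sphere $S$, then split into a ``vertex'' case (reducing cleanly to the planar Theorem~\ref{opt}) and a harder vertex-free case requiring face-by-face analysis. Your vertex case is exactly the paper's Case~1. The chief difference is in the normalization: the paper translates the \emph{southernmost} point of $P$ (the one minimizing $x_3$) to the origin, which forces $Q$ into the closed upper hemisphere---four specific faces meeting at a common vertex---rather than merely ``at most four pairwise non-antipodal closed faces,'' which is all your $s_\epsilon$ argument yields. This matters, because four non-antipodal faces need not form a hemisphere (consider the alternating selection $\{F_{+++},F_{+--},F_{-+-},F_{--+}\}$), so your vertex-free enumeration would have strictly more face configurations to dispatch than the paper's Cases~2 and~3, which already constitute the bulk of the work. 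Adopting the southernmost-point trick would align your outline with the paper's and cut the casework. One further point: your assertion that each face hosts at most three points does not follow immediately from the identity $\|\cdot\|_1=2\|\cdot\|_\infty$ on a face; one still has to observe that the induced metric on the $2$-simplex is hexagonal (so equilateral sets have size at most $3$), or argue directly as the paper does in Step~1 of Case~3, where it shows that three same-face points are determined up to a one-parameter family and a fourth is impossible.
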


\noindent \textbf{Remark on previous work for $k=1$.} After the initial posting of this paper to the arxiv server, we were alerted to previous work done in the $k=1$ case (referred to as \textit{equilateral sets}) in a variety of metric spaces, including $\R^d$ with the taxicab metric (referred to as \textit{rectilinear space}). Specifically, Theorem \ref{singdist} above follows from Corollary 4.2 of \cite{Band}, due to Bandelt, Chepoi, and Laurent, while Koolen, Laurent, and Schrijver \cite{Koolen} showed that if $P\subseteq \R^4$ determines a single $\ell^1$-distance, then $|P|\leq 8=|\Lambda_4(1)|$. This partially settles a question of Kusner (see Problem 0 in \cite{Guy}), who asked if $|P|\leq 2d=|\Lambda_d(1)|$ holds for subsets of $\R^d$ determining a single $\ell^1$-distance, and this remains open for $d\geq 5$. Conjecture \ref{optcon} below can be thought of as a precise, multi-distance generalization of Kusner's question. While the conclusion of Theorem \ref{singdist} was known previously, we believe our alternative, elementary proof given in Section \ref{3dsec} remains of interest.

\noindent In Section \ref{hdsec}, we explore the question of what additional hypotheses are required to prove the optimality of $\Lambda_3(k)$ for all $k\in \N$, or even $\Lambda_d(k)$ in full generality. We find that the proof of Theorem \ref{opt} can be fully adapted with a seemingly mild additional assumption, leading us to make the following general conjecture.

\begin{conjecture} \label{optcon} If $d,k\in \N$ and $P\subseteq \R^d$ determines at most $k$ distinct $\ell^1$-distances, then $|P|\leq |\Lambda_d(k)|$. Further, $|P|=|\Lambda_d(k)|$ if and only if $P$ is $\ell^1$-similar to $\Lambda_d(k)$. 

\end{conjecture}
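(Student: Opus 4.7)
The plan is to prove the conjecture by induction on the dimension $d$, with base cases $d=1$ (trivial) and $d=2$ (Theorem \ref{opt}), carrying out the inductive step via a hyperplane-slicing argument that mirrors the strategy of Theorem \ref{opt}. Given $P\subseteq \R^d$ determining distances $\delta_1<\cdots<\delta_k$, I fix a coordinate direction---say the $d$-th---and let $t_1<t_2<\cdots<t_m$ be the distinct values of the $d$-th coordinate among points of $P$. Writing $P_i=P\cap\{x_d=t_i\}$ and identifying $P_i$ with its projection $\pi(P_i)\subseteq\R^{d-1}$, $\ell^1$-distances within a single level are preserved by projection, so each $P_i$ determines at most $k$ distinct $\ell^1$-distances in $\R^{d-1}$, and the inductive hypothesis yields $|P_i|\leq|\Lambda_{d-1}(k)|$.

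The target inequality $|P|=\sum_i|P_i|\leq|\Lambda_d(k)|$ is engineered to match the recursion $|\Lambda_d(k)|=|\Lambda_{d-1}(k)|+2\sum_{j=0}^{k-1}|\Lambda_{d-1}(j)|$ of Theorem \ref{prop}(\ref{recur}). Concretely, the goal is to show that there are at most $2k+1$ levels, that one ``central'' level attains size at most $|\Lambda_{d-1}(k)|$, and that the remaining levels pair off with sizes at most $|\Lambda_{d-1}(k-1)|,|\Lambda_{d-1}(k-1)|,\ldots,|\Lambda_{d-1}(0)|,|\Lambda_{d-1}(0)|$. The outer-level decay is driven by the cross-level identity
\begin{equation*}
\norm{p-q}_1 \;=\; |t_i-t_{i'}| \;+\; \norm{\pi(p)-\pi(q)}_1 \qquad (p\in P_i,\ q\in P_{i'}),
\end{equation*}
which forces horizontal distances between $P_i$ and $P_{i'}$ to lie in the shifted set $\{\delta_j-|t_i-t_{i'}|:\delta_j\geq|t_i-t_{i'}|\}$. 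As the height separation grows, this shifted set shrinks, which---together with the within-level constraints---should allow the inductive hypothesis to be applied with progressively smaller values of $k$ to bound outer levels.

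The main obstacle, and the reason Conjecture \ref{optcon} remains a conjecture, is controlling the total number of levels $m$ and the rate of decay of outer level sizes without additional hypotheses. In dimension $2$, the $\ell^1$-norm is equivalent after a $45^{\circ}$ rotation to the $\ell^{\infty}$-norm, for which distances literally equal maximum coordinate differences; this immediately forces $m\leq 2k+1$ and drives the decay. In higher dimensions no such equivalence holds, since $\ell^1$-distances only dominate rather than equal coordinate differences, so $m$ could a priori be much larger than $2k+1$. This is almost certainly the role of the ``seemingly mild additional hypothesis'' alluded to in Section \ref{hdsec}: an a priori cap on the number of distinct values appearing in each coordinate projection, or equivalently a lattice-type assumption placing $P$ inside a bounded scaled integer grid. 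Under such a hypothesis, the slicing recursion above closes up cleanly, and the equality analysis can be completed by tracking which inequalities must be tight at each stage---matching each level to a translate of $\Lambda_{d-1}(j)$ for the appropriate $j$, then reassembling into an $\ell^1$-similar copy of $\Lambda_d(k)$. Removing the hypothesis and proving the conjecture in full generality would require a genuinely new geometric input forcing any configuration with too many levels in some coordinate direction to either repeat an $\ell^1$-distance or collapse to a lower-dimensional subspace handled by a separate argument.
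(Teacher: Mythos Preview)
The statement you are attempting is a \emph{conjecture}; the paper does not prove it unconditionally, and your proposal does not either---you explicitly acknowledge the gap. So the real question is how your outline compares to the paper's \emph{conditional} proof (Theorem \ref{condinc}), and in particular whether you have correctly identified the missing hypothesis.

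You have not. Your guess---a cap on the number of distinct values in each coordinate projection, or a lattice-type assumption---is not what the paper uses. The actual hypothesis (Conjecture \ref{axisp}) is that in any \emph{maximal} configuration the \emph{largest} $\ell^1$-distance occurs in an axis-parallel direction. This is much weaker-sounding than a global grid assumption but far more leveraged: if $\lambda_k=2k$ occurs parallel to $e_d$, then after translation both poles $\pm k e_d$ lie in $P$, and the two $\ell^1$-balls of radius $2k$ about those poles intersect in the $\ell^1$-ball of radius $k$ about the origin, trapping $P$ entirely. From there the paper removes the closed upper $\ell^1$-hemisphere $H$ to kill the distance $2k$ and inducts on $k$, not just on $d$. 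Your slicing idea does reappear, but only on $H$: because $ke_d\in P$, every point of $P\cap H$ has $x_d$-coordinate in the finite list $\{k-\lambda_i/2\}_{i=1}^k$, and the slice at height $k-\lambda_i/2$ is a $(d-1)$-dimensional $\ell^1$-sphere on which at most $i$ distances can occur. This is what forces the level count and the decay you were unable to obtain.

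In short, your horizontal-slicing recursion aimed at matching Theorem \ref{prop}(\ref{recur}) is morally in the right direction, but applied to all of $P$ it stalls exactly where you say it does, and the remedy is not a structural lattice assumption but the axis-parallel occurrence of the diameter. That single hypothesis simultaneously bounds the number of levels (on the hemisphere), supplies the containing ball, and makes the two inductions---on $d$ via the sphere/hemisphere propositions S-Opt and H-Opt, and on $k$ via removal of $H$---close up.
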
 

\section{Properties of $\Lambda_d(k)$: Proof of Theorems \ref{prop} and \ref{ffthm}} \label{propsec}

We begin this section by proving the essential properties of $\Lambda_d(k)$ that make it a worthy candidate for resolving the Erd\H{o}s-Fishburn problem for the taxicab metric. 

\subsection{Proof of Theorem \ref{prop}} Fix $k\in \N$.  For (\ref{2k}), fix $d\in \N$, note that by definition of $\Lambda_d(k)$, we have $\norm{n}_1\leq k$ for all $n\in \Lambda_d(k)$. In particular, for any $n,m\in \Lambda_d(k)$, we have by the triangle inequality that $$\norm{n-m}_1 \leq \norm{n}_1+\norm{m}_1 \leq k+k=2k. $$ 

\noindent Further, $\norm{n-m}_1=|n_1-m_1|+\cdots+|n_d-m_d|$ is certainly an integer, and by definition of $\Lambda_d(k)$, and the fact that an integer is congruent to its absolute value modulo 2,  we have \begin{align*} |n_1-m_1|+\cdots+|n_d-m_d| &  \equiv  n_1-m_1+\cdots +n_d-m_d \\ & \equiv (n_1+\cdots+n_d)-(m_1+\cdots+m_d) \\ & \equiv k-k \\& \equiv 0 \ (\text{mod }2).   \end{align*} Therefore, the only possible nonzero values of $\norm{n-m}_1$ are $2,4,\dots, 2k$, and for each $1\leq j \leq k$ the distance $2j$ is attained between the points $(j,0,\dots,0)$ and $(-j,0,\dots,0)$ if $j\equiv k \ (\text{mod }2)$, or between $(j,1,\dots,0)$ and $(-j,1,\dots,0)$ if $j \not \equiv k \ (\text{mod }2)$. 

\noindent For (\ref{start}), we first see that $$\Lambda_1(k)=\begin{cases}\{-k,-k+2,\dots,-1,1,\dots,k-2,k\} & \text{ if }k \text{ is odd} \\ \{-k, -k+2,\dots, -2, 0, 2,\dots, k-2, k\} & \text{ if }k \text{ is even}\end{cases}.$$ In particular, $|\Lambda_1(k)|=2\lceil k/2 \rceil=k+1$ if $k$ is odd and $|\Lambda_1(k)|=2(k/2)+1=k+1$ if $k$ is even. Secondly, we see that $\Lambda_d(1)$ is precisely $\{\pm e_i: 1\leq i \leq d\}$, where $\{e_i\}$ is the standard basis for $\R^d$.

\noindent For (\ref{recur}), we see that the possible values of the final coordinate for elements of $\Lambda_{d+1}(k)$ are integers satisfying $-k\leq x_{d+1} \leq k$. Further, for a fixed value $x_{d+1}=c$, the intersection of this hyperplane with $\Lambda_{d+1}(k)$ is $$\left\{(n_1,\dots,n_{d},c)\in \Z^{d+1}: |n_1|+\cdots+|n_{d}|\leq k-|c|, n_1+\cdots+n_{d}\equiv k-c\equiv k-|c| \ (\text{mod }2) \right\}, $$ which is in natural bijection with $\Lambda_{d}(k-|c|)$. Therefore, $$|\Lambda_{d+1}(k)|=\sum_{c=-k}^k |\Lambda_{d}(k-|c|)|=|\Lambda_{d}(k)|+2\sum_{j=0}^{k-1} |\Lambda_{d}(j)|. $$\qed

\noindent We continue by establishing a detailed formula for $|\Lambda_d(k)|$, which in particular guarantees that it has the correct order of magnitude $\Omega(k^d)$.

\subsection{Proof of Theorem \ref{ffthm}} We first note that by Theorem \ref{prop}(ii), we have $|\Lambda_1(k)|=k+1$ for all $k\geq 0$. We now fix $d\geq 2$, let $h=\lceil d/2\rceil -1$, and make the inductive hypothesis that \begin{equation} \label{indhyp} |\Lambda_{d-1}(k)|=a_{d-1,0}(k+1)^{d-1}+a_{d-1,1}(k+1)^{d-3}+\cdots+a_{d-1,h}(k+1)^{d-1-2h} \end{equation} for all $k\geq 0$. Faulhaber's formula gives  \begin{equation}\label{ff} F_{p}(n)=\sum_{j=1}^n j^p=\frac{n^{p+1}}{p+1}+\frac{n^p}{2}+\frac{1}{p+1}\sum_{i=0}^{p-1}{p+1 \choose i}B_{p+1-i} n^i, \end{equation} for all $n,p\in \N$, where $B_i$ is the $i$-th Bernoulli number. By Theorem \ref{prop}(iii), we have \begin{equation} \label{recur2}|\Lambda_d(k)|=2\sum_{j=0}^{k-1}|\Lambda_{d-1}(j)|+|\Lambda_{d-1}(k)|=2\sum_{j=0}^{k}|\Lambda_{d-1}(j)|-|\Lambda_{d-1}(k)|, \end{equation} which combines with (\ref{indhyp}) to yield \begin{align*} |\Lambda_d(k)| &=2\left(a_{d-1,0}\sum_{j=0}^k (j+1)^{d-1}+\cdots+a_{d-1,h}\sum_{j=0}^k (j+1)^{d-1-2h}\right)-|\Lambda_{d-1}(k)| \\ &=2\left(a_{d-1,0}\sum_{j=1}^{k+1} j^{d-1}+\cdots+a_{d-1,h}\sum_{j=1}^{k+1} j^{d-1-2h}\right)-|\Lambda_{d-1}(k)| \\ &=2\left(a_{d-1,0}F_{d-1}(k+1)+\cdots+a_{d-1,h}F_{d-1-2h}(k+1)\right)-|\Lambda_{d-1}(k)|.\end{align*} This tells us that we can indeed write $|\Lambda_d(k)|$ as a polynomial in $k+1$, but we wish to establish the claimed explicit and recursive formulas for the coefficients, as well as the fact that every other coefficient is zero. First we consider the $(k+1)^d$ coefficient, which only arises from the term $2a_{d-1,0}F_{d-1}(k+1)$. Since the $n^{p+1}$ coefficient of $F_{p}(n)$ is $1/(p+1)$, we have  $a_{d,0}=2a_{d-1,0}/d$. Using the base case $a_{1,0}=1$, we have by induction that $a_{d,0}=2^{d-1}/d!$, as claimed. 

\noindent Next we consider the $(k+1)^{d-1}$ coefficient, which arises from two sources: the $(k+1)^{d-1}$ coefficients of $2a_{d-1,0}F_{d-1}(k+1)$ and $-|\Lambda_{d-1}(k)|$, respectively. The former is $2a_{d-1,0}(1/2)=a_{d-1,0}$, while the latter is $-a_{d-1,0}$, which means that the $(k+1)^{d-1}$ coefficient of $|\Lambda_d(k)|$ is indeed $0$. More generally, for other coefficients corresponding to terms of the form $(k+1)^{d-1-2i}$, we use the following three facts: the $(k+1)^{d-1-2i}$ coefficient on $2a_{d-1,i}F_{d-1-2i}(k+1)=a_{d-1,i}$ by the same logic as above, the $(k+1)^{d-1-2i}$ coefficient of $-|\Lambda_{d-1}(k)|$ is $-a_{d-1,i}$, and the $(k+1)^{d-1-2i}$ coefficient of $F_{d-1-2\ell}(k+1)$ is $0$ for all $\ell<i$, because $B_n=0$ for all odd $n\geq 3$. Therefore, all $(k+1)^{d-1-2i}$ coefficients of $|\Lambda_d(k)|$ are $0$.

\noindent For the the $(k+1)^{d-2}$ coefficient, we begin by noting that a direct calculation using (\ref{ff}) and (\ref{recur2}) yields $|\Lambda_3(k)|=\frac{2}{3}(k+1)^3+\frac{1}{3}(k+1)$, hence $a_{3,1}=1/3$, which serves as the base case for another induction. Fixing $d\geq 4$ and assuming the claimed formula $a_{d-1,1}=2^{d-4}/(3(d-4)!)$ holds, the $(k+1)^{d-2}$  coefficient of $|\Lambda_d(k)|$ is formed by two contributions, from $2a_{d-1,0}F_{d-1}(k+1)$ and $2a_{d-1,1}F_{d-3}(k+1)$, respectively. 

\noindent The former is given by $$2a_{d-1,0}\left(\frac{1}{d}\right){d \choose d-2}B_2=2\cdot\frac{2^{d-2}}{(d-1)!}\cdot\frac{1}{d}\cdot\frac{d(d-1)}{2}\cdot\frac{1}{6}=\frac{2^{d-3}}{3(d-2)!},$$ while the latter is given by $$2a_{d-1,1}\cdot\frac{1}{d-2}=2\cdot\frac{2^{d-4}}{3(d-4)!}\cdot\frac{1}{d-2}=\frac{2^{d-3}}{3(d-4)!(d-2)}.$$ Therefore, we have $$a_{d,1}=\frac{2^{d-3}}{3(d-2)!}+\frac{2^{d-3}}{3(d-4)!(d-2)}=\frac{2^{d-3}+(d-3)2^{d-3}}{3(d-2)!}=\frac{2^{d-3}(d-2)}{3(d-2)!}=\frac{2^{d-3}}{3(d-3)!},$$ as claimed. 

\noindent More generally, by (\ref{ff}) and (\ref{recur2}), we see that the $(k+1)^{d-2i}$ coefficient of $|\Lambda_d(k)|$ receives a contribution from $2a_{d-1,\ell}F_{d-1-2\ell}(k+1)$ for each $0\leq \ell \leq i$. Specifically, that contribution is $$2a_{d-1,\ell}\cdot \frac{1}{d-2\ell} \cdot {d-2\ell \choose d-2i} \cdot B_{2i-2\ell}=\frac{2a_{d-1,\ell}}{d-2\ell}{d-2\ell \choose 2(i-\ell)} B_{2(i-\ell)}, $$ and the recursive formula for $a_{d,i}$ follows. \qed

\section{Optimality in Two Dimensions: Proof of Theorem \ref{opt}} \label{2dsec}

In this section, we prove the unique optimality of $\Lambda_2(k)$, in that it is the unique subset of $\R^2$, up to $\ell^1$-similarity, of maximal size amongst sets determining at most $k$ distinct $\ell^1$-distances. As referenced in Section \ref{mainres}, the proof is in part enabled by an equivalence between the $\ell^1$-norm and the $\ell^{\infty}$-norm on $\R^2$. We frame our discussion entirely in the context of the $\ell^1$-norm, but the connection is implicit in our proof, particularly the following lemma. 

\begin{lemma} \label{linfty} Let $v_1=(1,1)$ and $v_2=(-1,1)$. If $x \in \R^2$ with $x=c_1v_1+c_2v_2$, then $$\norm{x}_1=2\max\{|c_1|, |c_2|\}. $$
\end{lemma}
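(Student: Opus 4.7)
The plan is to reduce the lemma to the elementary scalar identity
\begin{equation*}
|a+b|+|a-b|=2\max\{|a|,|b|\} \quad \text{for all } a,b\in\R,
\end{equation*}
which handles everything once the coordinates of $x$ in the standard basis are computed.

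First I would expand: since $x=c_1(1,1)+c_2(-1,1)=(c_1-c_2,\,c_1+c_2)$, the $\ell^1$-norm is
\begin{equation*}
\norm{x}_1=|c_1-c_2|+|c_1+c_2|.
\end{equation*}
So the lemma is equivalent to the claim that $|c_1-c_2|+|c_1+c_2|=2\max\{|c_1|,|c_2|\}$, which is a statement purely about two real numbers.

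Next I would verify the scalar identity by a short case analysis. By the symmetry $a\leftrightarrow b$ we may assume $|c_1|\geq|c_2|$, and by the symmetry $a\leftrightarrow -a$ (which preserves both sides) we may further assume $c_1\geq 0$, so $c_1\geq|c_2|\geq 0$. Then $c_1+c_2\geq c_1-|c_2|\geq 0$ and $c_1-c_2\geq c_1-|c_2|\geq 0$, so both absolute values open without sign changes and
\begin{equation*}
|c_1-c_2|+|c_1+c_2|=(c_1-c_2)+(c_1+c_2)=2c_1=2\max\{|c_1|,|c_2|\}.
\end{equation*}

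There is essentially no obstacle here; the only thing to be careful about is setting up the reductions cleanly so that the case analysis shrinks to a single case, rather than writing out four sign cases separately. An alternative, slicker route would be to square both sides: $(|c_1-c_2|+|c_1+c_2|)^2=2(c_1^2+c_2^2)+2\bigl||c_1^2-c_2^2|\bigr|=4\max\{c_1^2,c_2^2\}$, from which the lemma follows after taking square roots and noting both sides are nonnegative. Either approach is routine, so I would choose whichever keeps the exposition shortest.
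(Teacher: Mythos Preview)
Your proof is correct and follows essentially the same route as the paper's: expand $x=(c_1-c_2,c_1+c_2)$, use two symmetries to reduce to the case $c_1\geq 0$, $|c_1|\geq|c_2|$, and then open the absolute values directly. The only cosmetic difference is that the paper phrases the symmetry reductions geometrically (reflecting over the diagonal $x_1=x_2$ and replacing $x$ by $-x$) rather than as symmetries of the scalar identity, and it does not mention your alternative squaring argument.
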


\begin{proof} Let $v_1=(1,1)$, $v_2=(-1,1)$, fix $x\in \R^2$, and write $x$ uniquely as $x=c_1v_1+c_2v_2=(c_1-c_2,c_1+c_2)$. By potentially reflecting over the diagonal $x_1=x_2$ and/or replacing $x$ by $-x$, both of which preserve the $\ell_1$-norm, we can assume without loss of generality that $|c_1|\geq |c_2|$ and $c_1 \geq 0$. In this case, $$\norm{x}_1=|c_1-c_2|+|c_1+c_2|=c_1-c_2+c_1+c_2=2c_1=2\max\{|c_1|,|c_2|\}.$$\end{proof}

\noindent \textbf{Remark.} As an alternative approach to Section 4, one could treat the connection between the $\ell^1$ and $\ell^{\infty}$ norms on $\R^2$ in a more explicit way. Namely, Lemma \ref{linfty} can be reframed as the statement that the map $f: (\R^2,\norm{\cdot}_1)\to (\R^2,\norm{\cdot}_{\infty})$ defined by $f(x,y)=(x+y,x-y)$ is a linear isomorphism satisfying $\norm{(x,y)}_1=\norm{f(x,y)}_{\infty}$. Therefore, any results results related to the $\ell^{\infty}$-norm can be immediately transferred to the $\ell^1$-norm via this isomorphism. In particular, the proofs that follow could be rewritten in a slightly cleaner way in the $\ell^{\infty}$ context. However, in order to maintain our hands-on approach with the taxicab metric, we have chosen to leave the proofs in their $\ell^1$ form.\\

\noindent Our main strategy for proving Theorem \ref{opt} is inspired by Erd\H{o}s and Fishburn \cite{EF}. Specifically, we suppose that $P \subseteq \R^2$ determines at most $k$ distinct $\ell^1$-distances, and we seek an upper bound on the number of points we must remove from $P$ in order to eliminate the largest $\ell^1$-distance, hence reducing to the case of $k-1$ distinct $\ell^1$-distances and allowing us to invoke an inductive hypothesis. The following sequence of lemmas formalizes this strategy. Here we define an \textit{$\ell^1$-ball} in the expected way, as the region bounded by an $\ell^1$-sphere, which for $d=2$ is an $\ell^1$-circle.

\

\begin{lemma} \label{SD} Suppose $P\subseteq \R^2$ is finite. If $D$ is the largest $\ell^1$-distance determined by $P$, then $P$ is contained in a closed $\ell^1$-ball of diameter $D$. 

\end{lemma}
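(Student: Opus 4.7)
The plan is to use Lemma \ref{linfty} to translate the problem into the $\ell^\infty$ language, where the analogous statement is completely transparent: any finite set of $\ell^\infty$-diameter $D$ is contained in its axis-aligned bounding box, which has $\ell^\infty$-diameter $D$. Pulling this back through the basis $v_1 = (1,1), v_2 = (-1,1)$ will produce an $\ell^1$-ball of diameter $D$ containing $P$.

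More concretely, I would write each $x \in P$ uniquely as $x = a_1(x) v_1 + a_2(x) v_2$, and set
\[
M_i = \max_{x\in P} a_i(x), \qquad m_i = \min_{x\in P} a_i(x), \qquad i=1,2.
\]
By Lemma \ref{linfty}, for any $x,y \in P$ we have $\norm{x-y}_1 = 2\max\{|a_1(x)-a_1(y)|, |a_2(x)-a_2(y)|\}$, so the largest $\ell^1$-distance in $P$ is
\[
D = 2\max\{M_1 - m_1,\; M_2 - m_2\}.
\]
In particular, $M_i - m_i \le D/2$ for $i=1,2$.

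Next I would designate the center
\[
c = \frac{M_1 + m_1}{2} v_1 + \frac{M_2 + m_2}{2} v_2,
\]
and verify, again via Lemma \ref{linfty}, that for any $x \in P$,
\[
\norm{x - c}_1 = 2 \max\!\left\{\left|a_1(x) - \tfrac{M_1+m_1}{2}\right|,\; \left|a_2(x) - \tfrac{M_2+m_2}{2}\right|\right\} \le 2 \cdot \tfrac{1}{2}\max\{M_1 - m_1, M_2 - m_2\} = D/2.
\]
Hence $P$ is contained in the closed $\ell^1$-ball of radius $D/2$, equivalently diameter $D$, centered at $c$.

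The argument is essentially a one-step reduction, so there is no serious obstacle; the only thing to be careful about is that the natural ``bounding box'' is taken in the rotated coordinates given by $v_1, v_2$ rather than the standard axis-aligned coordinates, and that the $\ell^1$-ball of diameter $D$ corresponds to radius $D/2$ (which matches the factor of $2$ appearing in Lemma \ref{linfty}).
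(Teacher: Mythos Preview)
Your proof is correct and follows essentially the same approach as the paper: both arguments write points of $P$ in the rotated basis $v_1=(1,1)$, $v_2=(-1,1)$, use Lemma~\ref{linfty} to identify $D$ with twice the larger side of the bounding rectangle in these coordinates, and then enclose that rectangle in a square of side $D/2$ (i.e., an $\ell^1$-ball of diameter $D$). The only cosmetic difference is that the paper enlarges the rectangle asymmetrically by sliding one edge, whereas you center the ball at the midpoint of the bounding box; both are equally valid.
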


\begin{proof}
Suppose $P \subseteq \R^2$ is finite. Let $v_1 =(1,1)$ and $v_2 = (-1,1)$. 
Since $\{v_1,v_2\}$ forms a basis for $\R^2$, every $x \in P$ can be written uniquely as $x= c_1 v_1 + c_2 v_2$. Choose $x_1,x_2,x_3,x_4\in P$ such that $x_1$ maximizes $c_1$, $x_2$ minimizes $c_1$, $x_3$ maximizes $c_2$, and  $x_4$ minimizes $c_2$.  Call these values $c_{1,\max}$, $c_{1,\min}$, $c_{2,\max}$, and $c_{2,\min}$, respectively. These choices contain $P$ inside of a rectangle $R$, rotated $45^{\circ}$ from axis parallel, determined by the inequalities $c_{1,\min}\leq c_1 \leq c_{1, \max}$ and $c_{2,\min} \leq c_2 \leq c_{2,\max}$.

\noindent Let $w_1 = c_{1, \max} - c_{1, \min}$ and $w_2 = c_{2, \max} - c_{2, \min}$, and assume without loss of generality that $w_1 \geq w_2$. By Lemma \ref{linfty}, we have that $\norm{x_1-x_2}_1=2w_1$ and $\norm{p_1-p_2}_1 \leq 2w_1$ for all $p_1,p_2\in R$, so $D=2w_1$ is the largest $\ell_1$-distance determined by $P$. Let $c_{2,\text{new}} = c_{2, \max} - w_1\leq c_{2,\min}$, and let $B\supseteq R \supseteq P$ be defined by the inequalities $c_{1,\min}\leq c_1 \leq c_{1, \max}$ and $c_{2,\text{new}} \leq c_2 \leq c_{2,\max}$. $B$ is a square rotated $45^{\circ}$ from axis parallel, or in other words a closed $\ell^1$-ball, of diameter $D$, as required.  \end{proof} 

\

\begin{lemma} \label{remove} If $P\subseteq \R^2$ is contained in a closed $\ell^1$-ball $B$ of diameter $D$, then the $\ell^1$-distance $D$ can be eliminated from $P$ by removing the points of P contained in any two adjacent sides of the boundary of $B$.

\end{lemma}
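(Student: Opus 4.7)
The plan is to carry out the proof entirely in the $(c_1, c_2)$ basis already introduced in Lemma \ref{SD}, where $v_1=(1,1)$ and $v_2=(-1,1)$. In these coordinates the closed $\ell^1$-ball $B$ of diameter $D$ is an axis-aligned square $S=[a_1,b_1]\times[a_2,b_2]$ with $b_1-a_1=b_2-a_2=D/2$, and the four sides of $\partial B$ correspond bijectively to the four sides of $S$ under the linear change of basis, with the adjacency relation preserved. By the $\ell^1$-similarities available (reflections across coordinate hyperplanes and the diagonal-swap used in Lemma \ref{linfty}), it suffices to treat a single representative pair of adjacent sides, say the top side $c_2=b_2$ and the right side $c_1=b_1$ of $S$.

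Next I would appeal to Lemma \ref{linfty}: for any $p,q\in B$ with $c$-coordinates $(c_1,c_2)$ and $(c_1',c_2')$,
$$\norm{p-q}_1=2\max\{|c_1-c_1'|,\,|c_2-c_2'|\}.$$
Since $p,q\in S$ we have $|c_1-c_1'|\le D/2$ and $|c_2-c_2'|\le D/2$, so $\norm{p-q}_1=D$ forces at least one of these inequalities to be an equality. Equality in the first forces $\{c_1,c_1'\}=\{a_1,b_1\}$, i.e.\ one of $p,q$ lies on the right side and the other on the left side of $S$; equality in the second forces $\{c_2,c_2'\}=\{a_2,b_2\}$, i.e.\ one lies on the top side and the other on the bottom. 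Thus every pair in $P$ realizing the distance $D$ uses either a point on the right side of $S$ or a point on the top side of $S$.

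After removing from $P$ all points lying on the union of these two adjacent sides, neither possibility can be realized by the remaining points, and hence no pair at $\ell^1$-distance $D$ survives. I expect no real obstacle here: once the correspondence between $\partial B$ and $\partial S$ is set up, the argument reduces to the pigeonhole observation that any two adjacent sides of $S$ together contain a ``left-or-right'' side and a ``top-or-bottom'' side, which is exactly what is needed to block both configurations that produce distance $D$. The only point requiring mild care is verifying that adjacency of boundary sides is invariant under the change of basis and that the relevant reflections count as $\ell^1$-similarities in the sense of Definition \ref{simdef}, both of which are immediate from the setup.
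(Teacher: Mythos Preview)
Your proof is correct, but it takes a genuinely different route from the paper's argument. The paper works directly in the standard $(x_1,x_2)$ coordinates: it reduces by a $90^{\circ}$ rotation to removing the closed upper semicircle $U$, and then runs a two-case analysis on a pair $x_1,x_2\in P\setminus U$ according to whether at least one of them lies in the open interior $B\setminus(U\cup L)$ (handled via the triangle inequality through the center) or both lie on the open lower semicircle $L$ (handled by a monotonicity argument replacing $x_1$ by the left vertex $a_1$).

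You instead pass to the $(c_1,c_2)$ coordinates of Lemma~\ref{linfty}, where $B$ becomes an axis-aligned square and $\norm{\cdot}_1$ becomes the $\ell^{\infty}$-norm up to a factor of $2$. In that picture, $\norm{p-q}_1=D$ forces one of $p,q$ onto the left--right pair of sides or onto the top--bottom pair, so any two adjacent sides (one from each pair) necessarily meet every distance-$D$ pair. This is exactly the $\ell^{\infty}$ reformulation the paper flags in the Remark following Lemma~\ref{linfty} but deliberately avoids in order to keep a ``hands-on'' $\ell^1$ presentation. Your version is shorter, avoids any case split, and more transparently yields the stronger observation (implicit in the paper's Case~2) that the distance $D$ can only occur between points on \emph{opposite} sides of $\partial B$. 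The paper's approach, on the other hand, stays entirely within the taxicab picture and does not rely on Lemma~\ref{linfty} or on checking that adjacency of sides is preserved under the change of basis.
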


\begin{proof} Suppose $P\subseteq \R^2$ is contained in a closed $\ell^1$-ball $B$ of diameter $D$. 

\noindent Let $a_1, a_2$ be the left and right vertices of $B$, respectively, so in particular $\norm{a_1-a_2}_1=D$. Let $U$ denote the closed (including $a_1,a_2$) upper $\ell^1$-semicircle connecting $a_1$ and $a_2$, and let $L$ denote the open (not including $a_1,a_2$) lower $\ell^1$-semicircle connecting $a_1$ and $a_2$. Since the $\ell^1$-norm is invariant under $90^{\circ}$ rotation, it suffices to establish the conclusion of the lemma for removing the points of $P$ lying in $U$. Suppose $x_1,x_2\in P\setminus U$.

\noindent \textbf{Case 1:} At least one of $x_1,x_2$ lies in $B\setminus(U\cup L)$, which is an open $\ell^1$-ball of radius $D/2$. 

\noindent Assume without loss of generality that $x_1\in B\setminus(U\cup L)$, and let $c$ be the center of $B$. Therefore, $\norm{x_1-c}_1 < D/2$ and $\norm{x_2-c}_1\leq D/2$. By the triangle inequality, $\norm{x_1-x_2}_1\leq \norm{x_1-c}_1+\norm{c-x_2}_1 < D/2+D/2=D.$ 

\noindent \textbf{Case 2:} $x_1,x_2\in L$. After possibly reflecting, assume without loss of generality that $x_1$ is to the left of $x_2$ and $\norm{x_1-a_1}_1\leq \norm{x_2-a_2}_1$, so $x_1$ is positioned at least as high as $x_2$, By replacing $x_1$ with $a_1$, we move up and to the left, so both the horizontal and vertical components of the $\ell^1$-distance to $x_2$ get larger, hence $$\norm{x_1-x_2}_1<\norm{a_1-x_2}_1=D. $$
 
\noindent In both cases, all distances amongst points in $P\setminus U$ are strictly less than $D$, and the lemma follows.   \end{proof}

\

\begin{lemma}\label{line} If $P\subseteq \R^d$ is contained in a line and determines at most $k$ distinct $\ell^1$-distances, then $|P|\leq k+1$. Further, if $|P|=k+1$, then $P$ is an arithmetic progression, meaning the $\ell^1$-distances are $\lambda,2\lambda,\dots, k\lambda$ for some $\lambda>0$.

\end{lemma}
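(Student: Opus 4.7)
The plan is to parametrize the line to reduce to the one-dimensional case, establish the size bound by a direct observation, and then classify the equality case by induction on $k$.

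First, write the line containing $P$ as $\{p_0 + tv : t \in \R\}$ for some direction $v \neq 0$, so each point of $P$ corresponds to a unique real scalar $t_i$, and the $\ell^1$-distance between two such points equals $\norm{v}_1 |t_i - t_j|$. This is a uniform rescaling of the usual distance on $\R$, so the problem reduces to: $n$ real numbers $t_1 < \cdots < t_n$ with at most $k$ distinct absolute differences satisfy $n \leq k+1$, with equality only for arithmetic progressions. The bound is immediate, since $t_n - t_1 > t_n - t_2 > \cdots > t_n - t_{n-1}$ give $n-1$ strictly decreasing positive (hence distinct) differences.

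For the equality classification I would induct on $k$, the case $k=1$ being trivial. Assume $|P|=k+1$. Then the $k$ differences $t_{k+1}-t_i$ for $1 \leq i \leq k$ exhaust all distinct distances of $P$, and the maximum distance $t_{k+1}-t_1$ is uniquely realized. Removing $t_{k+1}$ therefore produces a $k$-point set determining exactly $k-1$ distinct distances, which by the inductive hypothesis is an arithmetic progression $t_i = t_1 + (i-1)g$ for $1 \leq i \leq k$. The full distance set of $P$ is thus $\{g, 2g, \ldots, (k-1)g, t_{k+1}-t_1\}$, and it remains to force $t_{k+1}-t_1 = kg$.

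This final step is where the care is required. For each $2 \leq i \leq k$, the distance $t_{k+1} - t_i$ is strictly less than the maximum, so it must belong to $\{g, 2g, \ldots, (k-1)g\}$; in particular $t_{k+1} - t_1$ is a positive integer multiple $cg$ of $g$, and then the requirement that $(c-i+1)g$ also lie in $\{g, \ldots, (k-1)g\}$ for every $i = 2, \ldots, k$ pins down $c = k$. Rescaling by $\norm{v}_1$ recovers the claimed $\ell^1$-distances $\lambda, 2\lambda, \ldots, k\lambda$ with $\lambda = g \norm{v}_1$. The main obstacle is precisely this pin-down step: no single constraint on an individual $t_{k+1} - t_i$ determines $t_{k+1}$, so one must use the full system of constraints together.
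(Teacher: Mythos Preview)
Your proof is correct and follows essentially the same approach as the paper: reduce to $\R$ via the line parametrization, get the bound $|P|\le k+1$ from the chain of distances to an endpoint, and for equality remove the extreme point and apply induction to identify an arithmetic progression. The only difference is cosmetic: in the final pin-down step the paper does a two-case split (``$x_{k+1}<x_1+k\lambda$'' versus ``$x_{k+1}>x_1+k\lambda$'', each producing two new distances), whereas you first observe $t_{k+1}-t_1=cg$ for an integer $c$ and then use the constraints at $i=2$ and $i=k$ to force $c=k$; both arguments are equally short and yield the same conclusion.
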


\begin{proof} Since $\ell^1$-distance along a straight line in $\R^d$ is just a constant multiple, depending on the direction of the line, times the standard Euclidean distance, it suffices to establish the lemma with $d=1$, for which we induct on $k$. 

\noindent The base case $k=1$ is trivial, as three points $x_1<x_2<x_3$ in $\R$ automatically determine two distances $x_2-x_1<x_3-x_1$, and any two points form an arithmetic progression. 

\noindent Now, fix $k\geq 2$, and assume that if $Q\subseteq \R$ determines at most $k-1$ distances, then $|Q|\leq k$, and further, if $|Q|=k$, then $Q$ is an arithmetic progression. Now suppose $P\subseteq \R$ determines at most $k$ distances.

\noindent Let $P=\{x_1<x_2<\cdots<x_n\}$. The $n-1$ distances $x_2-x_1<x_3-x_1<\cdots<x_n-x_1$ are all distinct, hence $n-1\leq k$, or in other words $n\leq k+1$. Further, suppose $n=k+1$. By removing $x_{k+1}$, we also remove the longest distance $x_{k+1}-x_1$, so the set $Q=\{x_1,\dots, x_k\}$ determines $k-1$ distances. By our inductive hypothesis, $Q$ must be an arithmetic progression, in other words $Q=\{x_1,x_1+\lambda,x_1+2\lambda, \dots x_1+(k-1)\lambda\}$. 

\noindent If $x_{k+1}<x_1+k\lambda$, then both $x_{k+1}-x_1>(k-1)\lambda$ and $x_{k+1}-x_k<\lambda$ are new distances not determined by $Q$. If $x_{k+1}>x_1+k\lambda$, then both $x_{k+1}-x_1>k\lambda$ and $x_{k+1}-x_2>(k-1)\lambda$ are new distances not determined by $Q$. In either case, $P$ determines at least $k+1$ distinct distances, contradicting the assumption that it determines at most $k$ distances. Therefore, $x_{k+1}$ must be $x_1+k\lambda$, and the lemma follows.\end{proof}

\begin{lemma} \label{semicirc} If $S \subseteq \R^2$ is contained in the union of two adjacent sides of an $\ell^1$-circle and determines at most $k$ distinct $\ell^1$-distances, then $|S|\leq 2k+1$. Further, if $|S|=2k+1$, then the points of $S$ on each side form an arithmetic progression containing the shared vertex.
\end{lemma}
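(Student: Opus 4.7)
The plan is to parametrize the two adjacent sides by $\ell^1$-distance from the shared vertex $v$ and exploit the resulting distance structure. After an $\ell^1$-similarity, we may take $v = (0, r)$ with the sides running to $(r, 0)$ and $(-r, 0)$, so a point at parameter $t \ge 0$ on the first side is $(t/2, r - t/2)$ and a point at parameter $s \ge 0$ on the second side is $(-s/2, r - s/2)$. A direct calculation yields the two facts the argument rests on: the $\ell^1$-distance between two points on the same side equals the absolute difference of their parameters, while the $\ell^1$-distance between a first-side point at parameter $t$ and a second-side point at parameter $s$ equals $\max(t, s)$.

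Next, let $S_i$ denote the points of $S$ lying on the $i$-th side, with $v$ counted in both $S_1$ and $S_2$ when $v \in S$, so that $|S| = |S_1| + |S_2| - 1$ if $v \in S$ and $|S| = |S_1| + |S_2|$ otherwise. Since each $S_i$ is collinear and inherits at most $k$ distinct distances, Lemma \ref{line} gives $|S_i| \le k+1$. When $v \in S$ this immediately yields $|S| \le 2k+1$. When $v \notin S$, I would rule out $|S_i| = k+1$ whenever the other side is also nonempty: Lemma \ref{line} would then force the parameters of $S_i$ to form an arithmetic progression $\{a, a+\lambda, \dots, a+k\lambda\}$ with $a > 0$, whose $k$ within-side differences $\lambda, 2\lambda, \dots, k\lambda$ already exhaust the allowed distance set; the cross-distance from the outermost $S_i$-point to any point of the other side is then at least $a + k\lambda > k\lambda$, a forbidden new distance. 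Hence either one $S_i$ is empty (giving $|S| \le k+1$) or $|S_i| \le k$ for both $i$ (giving $|S| \le 2k$), and in all cases $|S| \le 2k+1$.

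For the equality case $|S| = 2k+1$, the preceding bounds force $v \in S$ and $|S_1| = |S_2| = k+1$, so Lemma \ref{line} presents each $S_i$ as an arithmetic progression. Since $v$ is an endpoint of each side, it is the minimum-parameter element on each, so the parameter sets take the form $\{0, \lambda, 2\lambda, \dots, k\lambda\}$ and $\{0, \mu, 2\mu, \dots, k\mu\}$ on the two sides. The within-side distance sets $\{\lambda, \dots, k\lambda\}$ and $\{\mu, \dots, k\mu\}$ each contain exactly $k$ elements and must both lie inside the $k$-element total distance set of $S$, so they must coincide, forcing $\mu = \lambda$ by matching smallest elements. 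This gives the stated arithmetic-progression conclusion, with common gap $\lambda$ on both sides and $v$ as the shared initial term.

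The main obstacle I expect is the one-step improvement in the $v \notin S$ case: passing from the naive bound $|S_1| + |S_2| \le 2k+2$ down to $2k$ is precisely where the cross-distance identity $\max(t, s)$ does genuine work, whereas the rest of the argument uses little more than collinearity applied side by side with Lemma \ref{line}.
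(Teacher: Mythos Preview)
Your proof is correct and follows essentially the same approach as the paper: apply Lemma~\ref{line} on each side, then use the cross-side distance structure to force the shared vertex into $S$ whenever a side carries $k+1$ points. Your explicit parametrization and the resulting cross-distance formula $\max(t,s)$ make this step more transparent than the paper's monotone chain of distances from the extreme point, and your additional deduction that the two common differences coincide ($\lambda=\mu$) is true but goes slightly beyond what the lemma asserts.
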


\begin{proof}Suppose $S \subseteq \R^2$ is contained in the union of two adjacent sides of an $\ell^1$-circle and determines at most $k$ distinct $\ell^1$-distances. Assume without loss of generality that the two adjacent sides are the closed upper semicircle. We know from Lemma \ref{line} that there are at most $k+1$ points on each of the two sides. 

\noindent Further, if $|S|\geq 2k+1$, then there are exactly $k+1$ points on one side, assume the left, and at least $k$ points on the right side. Let $x_1,\dots,x_{k+1}$ denote the points of $P$ on the left side, ordered left to right, and let $y$ be any point of $P$ on the right side. We note that $$\norm{x_1-x_2}_1<\norm{x_1-x_3}_1<\cdots<\norm{x_1-x_{k+1}}_1\leq \norm{x_1-y}_1, $$ and $\norm{x_1-x_{k+1}}_1= \norm{x_1-y}_1$ is only possible if $x_{k+1}$ is the vertex shared by the two sides.  In particular, if the shared vertex is not included amongst the $k+1$ points on the left side, then at least $k+1$ distinct $\ell^1$-distances occur from the leftmost point, contradicting our assumption. 

\noindent Therefore, if $|S|\geq 2k+1$, it must be the case that there are exactly $k+1$ points on both the left and right sides, including the shared vertex, meaning in fact $|S|=2k+1$. Finally, by Lemma \ref{line}, we know that the $k+1$ points on each side must form an arithmetic progression.  \end{proof}

\noindent We are now fully armed to show the unique optimality of $\Lambda_2(k)$.

\begin{proof}[Proof of Theorem \ref{opt}] We induct on $k$.  For our base case, consider $k=0$. In order for a set to determine $0$  $\ell^1$-distances (as always, not including $0$), it can contain at most $1=(0+1)^2$ point, and if it contains a point, then it is trivially a translation of $\Lambda_2(0)=\{(0,0)\}$.

\noindent Now, fix $k\in \N$, assume the conclusion of the theorem holds for $k-1$, and suppose $P\subseteq \R^2$ determines at most $k$ distinct $\ell^1$-distances. By Lemma \ref{SD}, $P$ is contained in a closed $\ell^1$-ball $B$ of diameter $D$, where $D$ is the largest $\ell^1$-distance determined by $P$. By Lemma \ref{remove}, we can remove the distance $D$ by removing the points of $P$ that lie on the closed upper $\ell^1$-semicircle $U$ on the boundary of $B$. Since $D$ has been removed as an $\ell^1$-distance, we know that $T=P\setminus U$ determines at most $k-1$ distinct $\ell^1$-distances. By our inductive hypothesis, $|T|\leq k^2$, and if $|T|=k^2$, then $T$ is $\ell^1$-similar to $\Lambda_2(k-1)$.

\noindent Further, by Lemma \ref{semicirc}, we know that $S=P\cap U$ satisfies $|S|\leq 2k+1$, and if $|S|=2k+1$, then $S$ consists of two $(k+1)$-term arithmetic progressions, one on each side of $U$, which meet at the shared vertex. Therefore, $|P|\leq |T|+|S|\leq k^2+2k+1=(k+1)^2$, and $|P|=(k+1)^2$ if and only if $T$ is $\ell^1$-similar to $\Lambda_2(k-1)$ and $S$ is a union of two arithmetic progressions meeting at the shared vertex. Finally, the only way these two sets can be combined without creating additional $\ell^1$-distances is for $S\cup T$ to be $\ell^1$-similar to $\Lambda_2(k)$. \end{proof}

\section{Single $\ell^1$-distance in three dimensions: Proof of Theorem \ref{singdist}} \label{3dsec}

Without analogs of Lemmas \ref{linfty} and \ref{SD} in dimension $d\geq 3$, our strategy for proving Theorem \ref{opt} does not naturally generalize to higher dimensions. However, in the case of $k=1$, we make the observation that if $P\subseteq \R^d$ determines a single $\ell^1$-distance, then all but the ``southernmost" point (the point minimizing the last coordinate) of $P$ lie on a single closed upper $\ell^1$-hemisphere. The following sequence of lemmas provide a detailed investigation into how $\ell^1$-distance behaves between points on a single upper $\ell^1$-hemisphere in $\R^3$, which consists of four flat faces, one for each quadrant determined by the first two coordinates, intersecting at a single northernmost point. The three lemmas correspond to the cases where the points lie on the same face, opposite faces, or neighboring faces, respectively.

\

\begin{lemma} \label{face} Suppose $V,W\in \R^3$ with $V=(x_1,y_1,z_1)$ and $W=(x_2,y_2,z_2)$. If $\norm{V}_1=\norm{W}_1$ and $x_1x_2,y_1y_2,z_1z_2 \geq 0$, then $$\norm{V-W}_1=2\max\{|x_1-x_2|,|y_1-y_2|,|z_1-z_2|\}. $$
\end{lemma}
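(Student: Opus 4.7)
The plan is to reduce to a purely algebraic identity about three real numbers summing to zero. The sign hypotheses $x_1x_2,y_1y_2,z_1z_2\geq 0$ say exactly that, in each coordinate, $V$ and $W$ fall on the same side of the origin (or one of them is on the hyperplane). So I would begin by applying, coordinate-by-coordinate, the reflection $x_i\mapsto -x_i$ whenever $x_1,x_2\leq 0$, and similarly in the $y$ and $z$ coordinates. These reflections are $\ell^1$-isometries, preserve the hypothesis $\norm{V}_1=\norm{W}_1$, preserve the quantity we are trying to compute, and reduce us to the case in which all six coordinates are non-negative, i.e.\ $V$ and $W$ both lie in the non-negative octant.

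In this reduced setting, $\norm{V}_1 = x_1+y_1+z_1$ and $\norm{W}_1=x_2+y_2+z_2$, so the equal-norm hypothesis becomes
\[
(x_1-x_2)+(y_1-y_2)+(z_1-z_2)=0.
\]
Set $a=x_1-x_2$, $b=y_1-y_2$, $c=z_1-z_2$. Then $\norm{V-W}_1=|a|+|b|+|c|$, and the target identity reduces to the following elementary claim: if $a+b+c=0$, then $|a|+|b|+|c|=2\max\{|a|,|b|,|c|\}$.

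I would prove this claim directly. Assume without loss of generality that $|a|=\max\{|a|,|b|,|c|\}$, so that $a=-(b+c)$ gives $|a|=|b+c|$. If $b$ and $c$ have the same sign (including zero), then $|b+c|=|b|+|c|$, hence $|a|=|b|+|c|$ and $|a|+|b|+|c|=2|a|$. If $b$ and $c$ have opposite signs, say $|b|\geq |c|$ (the other subcase is symmetric), then $|a|=|b|-|c|$, but the maximality $|a|\geq |b|$ forces $|c|=0$, reducing again to the same-sign case. In every case we obtain $|a|+|b|+|c|=2|a|$, which is exactly what is needed.

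I do not expect a serious obstacle here: the sign hypothesis exists precisely to enable the octant reduction, after which the $\ell^1$-norms become genuine linear forms and the three differences become linearly constrained. The only step requiring any care is the sign case analysis in the algebraic claim, and even there the maximality of $|a|$ collapses the opposite-sign subcase immediately.
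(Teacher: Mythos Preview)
Your proof is correct and follows essentially the same approach as the paper: reduce via coordinate reflections to the nonnegative octant so that the equal-norm hypothesis becomes $a+b+c=0$ for the coordinate differences, then use a sign argument to conclude $|a|+|b|+|c|=2\max\{|a|,|b|,|c|\}$. The paper phrases the final step slightly differently (it also permutes coordinates and relabels $V,W$ to force $a\geq 0$ and $a$ maximal, then argues directly that $b,c\leq 0$), but your abstraction to the standalone identity for three reals summing to zero is a clean equivalent.
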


\

\begin{proof} Suppose $V,W\in \R^3$, $V=(x_1,y_1,z_1)$, $W=(x_2,y_2,z_2)$, $\norm{V}_1=\norm{W}_1=\lambda$, and $x_1x_2,y_1y_2,z_1z_2 \geq 0$. After reflections about coordinate planes, coordinate permutations, and relabeling $V$ and $W$ (which all preserve both sides of the equation in the conclusion of the lemma), we can assume without loss of generality that all coordinates are nonnegative and $x_1-x_2\geq |y_1-y_2| \geq |z_1-z_2|$. Since $$\norm{V}_1=x_1+y_1+z_1=\norm{W}_1=x_2+y_2+z_2=\lambda,$$ we have in particular that $(x_1-x_2)+(y_1-y_2)+(z_1-z_2)=0$. Since the largest coordinate distance is in the $x$-direction, and $x_1\geq x_2$, we must have $y_1\leq y_2$ and $z_1\leq z_2$. Therefore \begin{align*}\norm{V-W}_1&=(x_1-x_2)+(y_2-y_1)+(z_2-z_1) \\ &=x_1-x_2+y_2-y_1+(\lambda-x_2-y_2)-(\lambda-x_1-y_1) \\ &=2(x_1-x_2), \end{align*} and the lemma follows.
\end{proof}

\

\begin{lemma} \label{oppface} Suppose $V,W\in \R^3$ with $V=(x_1,y_1,z_1)$ and $W=(x_2,y_2,z_2)$. If $\norm{V}_1=\norm{W}_1=\lambda$,  $x_1x_2\leq 0$, $y_1y_2\leq 0$, and $z_1,z_2\geq 0$, then $$\norm{V-W}_1=2(\lambda-\min\{z_1,z_2\}). $$
\end{lemma}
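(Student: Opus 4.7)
The plan is to reduce to the case $z_1 \le z_2$ by symmetry (which makes $\min\{z_1,z_2\}=z_1$ and the target identity $\norm{V-W}_1 = 2(\lambda - z_1)$), and then exploit the sign hypotheses on the first two coordinates to convert the absolute values appearing in the $\ell^1$-distance into sums rather than differences.

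Specifically, I would first note that $x_1x_2\le 0$ forces $x_1$ and $x_2$ to lie on opposite sides of zero (or one to be zero), and in either case $|x_1-x_2|=|x_1|+|x_2|$. The same reasoning applied to $y_1,y_2$ gives $|y_1-y_2|=|y_1|+|y_2|$, while $z_1,z_2\ge 0$ together with $z_1\le z_2$ gives $|z_1-z_2|=z_2-z_1$. Summing yields
\[
\norm{V-W}_1 = |x_1|+|x_2|+|y_1|+|y_2|+z_2-z_1.
\]

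Finally, the hemisphere constraints $|x_i|+|y_i|+z_i=\norm{V_i}_1=\lambda$ for $i=1,2$ rearrange to $|x_i|+|y_i|=\lambda-z_i$, and substituting gives
\[
\norm{V-W}_1 = (\lambda-z_1)+(\lambda-z_2)+(z_2-z_1) = 2(\lambda-z_1),
\]
as desired. There is no real obstacle here: the opposite-sign hypothesis on the $xy$-coordinates is exactly what converts the signed differences into genuine sums of absolute values, and the hemisphere constraint then collapses each such sum into $\lambda$ minus the third coordinate. The only mild care needed is to verify that the WLOG reduction $z_1\le z_2$ preserves both sides of the asserted equality, which is immediate since swapping $V$ and $W$ leaves $\norm{V-W}_1$ and $\min\{z_1,z_2\}$ unchanged.
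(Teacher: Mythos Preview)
Your proof is correct and follows essentially the same route as the paper's: both reduce to $z_1\le z_2$, expand $\norm{V-W}_1$, and use $|x_i|+|y_i|=\lambda-z_i$ to collapse the sum to $2(\lambda-z_1)$. The only cosmetic difference is that the paper fixes the signs of $x_i,y_i$ by reflecting about coordinate planes, whereas you invoke the identity $|a-b|=|a|+|b|$ when $ab\le 0$ directly; the resulting computation is identical.
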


\begin{proof} Suppose $V,W\in \R^3$ with $V=(x_1,y_1,z_1)$, $W=(x_2,y_2,z_2)$, $\norm{V}_1=\norm{W}_1=\lambda$,  $x_1x_2\leq 0$, $y_1y_2\leq 0$, and $z_1,z_2\geq 0$. After reflections about coordinate planes and relabeling $V$ and $W$, we can assume without loss of generality that $x_1,y_1\geq 0$, $x_2,y_2\leq 0$, and $z_1\leq z_2$. 

\noindent Therefore, $x_1+y_1=\lambda-z_1$ while $-x_2-y_2=\lambda-z_2$, hence \begin{align*} \norm{V-W}_1&=(x_1-x_2)+(y_1-y_2)+(z_2-z_1) \\ &= \lambda-z_1+\lambda-z_2+z_2-z_1 \\ &=2(\lambda-z_1),
\end{align*} and the lemma follows. \end{proof}

\

\begin{lemma} \label{neighbor} Suppose $V,W\in \R^3$ with $V=(x_1,y_1,z_1)$, $W=(-x_2,y_2,z_2)$, $\norm{V}_1=\norm{W}_1=\lambda$, and $x_1x_2, y_1y_2, z_1z_2 \geq 0$. If $\norm{V-W}_1=\lambda$, then $|x_1|\leq \lambda/2$.  

\end{lemma}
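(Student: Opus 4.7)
The plan is to reduce to the first octant via symmetry, expand the hypothesis $\norm{V-W}_1=\lambda$ coordinatewise, and then apply the simple bound $|a-b|\geq b-a$ to the $y$- and $z$-coordinates in a way that exploits the norm identities $\norm{V}_1=\norm{W}_1=\lambda$.

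First, I note that each of the eight reflections about coordinate planes preserves the $\ell^1$-norm, the sign conditions $x_1x_2, y_1y_2, z_1z_2\geq 0$, the form $W=(-x_2,y_2,z_2)$ (after a matching sign flip on $x_2$), the quantity $|x_1|$, and the distance $\norm{V-W}_1$. Hence we may assume without loss of generality that $x_1,y_1,z_1,x_2,y_2,z_2\geq 0$, so that
\begin{equation*}
x_1+y_1+z_1=\lambda,\qquad x_2+y_2+z_2=\lambda.
\end{equation*}

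Next, under these sign assumptions, $|x_1-(-x_2)|=x_1+x_2$, so the hypothesis $\norm{V-W}_1=\lambda$ reads
\begin{equation*}
(x_1+x_2)+|y_1-y_2|+|z_1-z_2|=\lambda.
\end{equation*}
Using the trivial bounds $|y_1-y_2|\geq y_2-y_1$ and $|z_1-z_2|\geq z_2-z_1$, I sum to obtain
\begin{equation*}
|y_1-y_2|+|z_1-z_2|\geq (y_2+z_2)-(y_1+z_1)=(\lambda-x_2)-(\lambda-x_1)=x_1-x_2.
\end{equation*}
Combining the last two displays gives $\lambda-(x_1+x_2)\geq x_1-x_2$, i.e.\ $\lambda\geq 2x_1$, which is the desired inequality $|x_1|=x_1\leq \lambda/2$.

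There is essentially no obstacle here beyond choosing the correct pair of one-sided bounds on $|y_1-y_2|$ and $|z_1-z_2|$: the symmetric pair $|y_1-y_2|\geq y_1-y_2$, $|z_1-z_2|\geq z_1-z_2$ would instead yield $x_2\leq \lambda/2$, so it is important to pick the bounds that eliminate $x_2$ on the desired side. The reduction step is routine because all the relevant data (the norm, the sign conditions, and $|x_1|$) are invariant under reflections in coordinate planes.
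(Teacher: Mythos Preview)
Your proof is correct and follows essentially the same approach as the paper's: both reduce to nonnegative coordinates by reflection, expand $\norm{V-W}_1=(x_1+x_2)+|y_1-y_2|+|z_1-z_2|$, and then apply the one-sided bounds $|y_1-y_2|\ge y_2-y_1$, $|z_1-z_2|\ge z_2-z_1$ together with the norm identities to squeeze out $2x_1\le\lambda$. The only cosmetic differences are that the paper additionally scales to $\lambda=2$ and phrases the key inequality as an upper bound on $y_2+z_2$ rather than a lower bound on $|y_1-y_2|+|z_1-z_2|$.
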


\begin{proof} Suppose $V,W\in \R^3$ with $V=(x_1,y_1,z_1)$, $W=(-x_2,y_2,z_2)$, $\norm{V}_1=\norm{W}_1=\lambda$,  $x_1x_2, y_1y_2, z_1z_2 \geq 0$. After reflecting about coordinate planes and scaling, we can assume $x_1,x_2,y_1,y_2,z_1,z_2\geq 0$, and $\lambda=2$. If $\norm{V-W}_1=2$, then the largest possible value of $y_2+z_2$ is $y_1+z_1+2-(x_1+x_2)$. However, since $\norm{W}_1=2$, we must have $y_2+z_2=2-x_2$, hence $2-x_2\leq y_1+z_1+2-(x_1+x_2)$, which rearranges to $x_1\leq  y_1+z_2=2-x_1$, hence $x_1\leq 1$, as required. \end{proof}

\

\noindent We now establish the unique optimality of $\Lambda_3(1)$ by conducting a case analysis based on the concentration of the points of $P$, apart from the southernmost point, on the four faces of a single closed upper $\ell^1$-hemisphere.

\

\begin{proof}[Proof of Theorem \ref{singdist}] Suppose $P\subseteq \R^3$ determines a single $\ell^1$-distance $\lambda$, and choose a point $c\in P$ that minimizes the $z$-coordinate. By translating and dilating, we can assume without loss of generality that $c=(0,0,0)$ and $\lambda=2$, and hence the remaining elements of $P$ are all contained in the closed upper $\ell^1$-hemisphere $H$ of radius $2$ centered at $(0,0,0)$. We note that the southernmost point of $\Lambda_3(1)$ is $(0,0,-1)$, so our end goal in this proof is to show that $|P|<6$ unless $P$ is $\Lambda_3(1)$ shifted up by $1$.

\noindent We consider the different ways that $P$ can be concentrated on the faces of $H$. To this end, we define $H_{++}=\{(x,y,z)\in H: x,y\geq 0\}$ and $H_{+-}=\{(x,y,z)\in H: x\geq 0, y\leq 0\}$, with analogous definitions for $H_{-+}$ and $H_{--}$. We refer to the pair $H_{++}$, $H_{--}$ as \textit{opposite} faces, and likewise for 
$H_{+-}$, $H_{-+}$. The three lemmas proven at the beginning of this section allow us to make the following assertions: 

\begin{enumerate}[(i)] \item \label{oppitem} For any pair of distinct points $U=(x_1,y_1,z_1),V=(x_2,y_2,z_2)\in P\cap H$, with $U$ and $V$ lying on opposite faces, we have by Lemma \ref{oppface} that $\min\{z_1,z_2\}=1$. \\

\item \label{sameitem} For any pair of distinct points $U=(x_1,y_1,z_1),V=(x_2,y_2,z_2)\in P\cap H$, with $U$ and $V$ lying on the same face, we have by Lemma \ref{face} that $\max\{ |x_1-x_2|,|y_1-y_2|,|z_1-z_2| \}=1$. \\

\item \label{nitem} For distinct points $U=(x_1,y_1,z_1),V=(x_2,y_2,z_2)\in P\cap H$, with $U\in H_{++}$ and $V\in H_{-+}$, we have by Lemma \ref{neighbor} that $x_1\leq 1$. Similarly, by permuting coordinates, if $U\in H_{++}$ and $V\in H_{+-}$, then $y_1\leq 1$.

\end{enumerate}

\noindent If $|P|\geq 6$, then at least five points of $P$ lie on $H$, and in particular the sizes of the four intersections of $P$ with the respective faces of $H$ must add to at least five. Therefore, the only possible arrangements of $P\cap H$ include either three points on a single face, or two points on one face and a point on the opposite face. Further, the proof is greatly simplified in the case that the ``north pole" $(0,0,2)\in P$, so we divide the argument into the following three cases:

\begin{itemize}

\item{\textbf{Case 1:}} $(0,0,2)\in P$. \\

\item{\textbf{Case 2:}}  $(0,0,2)\notin P$, and $P$ contains three points $U,V,W\in H$ such that $U$ and $V$ lie on the same face, and $W$ lies on the opposite face. \\

\item{\textbf{Case 3:}} $(0,0,2)\notin P$, and there exists a face of $H$ containing at least three points of $P$.

\end{itemize}

\noindent \textbf{Proof for Case 1:} Let $V=(0,0,2)$. For $Q=(x,y,z)\in (P\cap H)\setminus \{V\}$, we have by (\ref{oppitem}) that $z=1$. In particular, the elements of $P$ other than $(0,0,0)$ and $(0,0,2)$ take the form $(x,y,1)$ with $|x|+|y|=1$, and all pairs are separated by $\ell^1$-distance $2$. By Theorem \ref{opt}, there can be at most four such elements, and the only choice of four that works is $(1,0,1)$, $(-1,0,1)$, $(0,1,1)$, and $(0,-1,1)$. The resulting arrangement is $\Lambda_3(1)$ translated up by $1$, which establishes Theorem \ref{singdist} in this case.

%\noindent For $Q=(x,y,z)\in (H\cap P)\setminus \{V\}$, we have by Lemma \ref{face} that $\norm{Q-V}_1=2(2-z)=2$, hence $z=1$. In particular, the elements of $P$ other than $(0,0,0)$ and $(0,0,2)$ take the form $(x,y,1)$ with $|x|+|y|=1$, and all pairs are separated by $\ell^1$-distance $2$. By Theorem \ref{opt}, there can be at most four such elements, and the only choice of four that works is $(1,0,1)$, $(-1,0,1)$, $(0,1,1)$, and $(0,-1,1)$. The resulting arrangement is $\Lambda_3(1)$ translated up by $1$, which establishes Theorem \ref{singdist} in this case.

%\noindent For the remainder of the proof, we may assume that $(0,0,2)\notin P$, and we consider the different ways that $P$ can be concentrated on the faces of $H$. To this end, we define $H_{++}=\{(x,y,z)\in H: x,y\geq 0\}$ and $H_{+-}=\{(x,y,z)\in H: x\geq 0, y\leq 0\}$, with analogous definitions for $H_{-+}$ and $H_{--}$. We refer to the pair $H_{++}$, $H_{--}$ as \textit{opposite} faces, and likewise for 
%$H_{+-}$, $H_{-+}$.

\noindent \textbf{Proof for Case 2:} After reflecting about coordinate planes, we can assume that $U,V\in H_{++}$ and $W\in H_{--}$. Letting $U=(x_0,y_0,z_0)$,  $V=(x_1,y_1,z_1)$, and $W=(x_2,y_2,z_2)$, we have by (\ref{oppitem}) and (\ref{sameitem}) that $$\max\{|x_0-x_1|,|y_0-y_1|,|z_0-z_1|\}=\min\{z_0,z_2\}=\min\{z_1,z_2\}=1.$$ In particular, all three $z$ coordinates are at least $1$, and since $(0,0,2)\notin P$, we have $|z_0-z_1|<1$. Therefore, we simultaneously know that $0\leq x_0,x_1,y_0,y_1 \leq 1$ and $\max\{|x_0-y_0|,|x_1-y_1|\}=1$. 

\noindent This implies that (after potentially relabeling) either $U=(1,0,1)$ and $V=(0,y,2-y)$ for some $0<y\leq 1$ or $U=(x,0,2-x)$ for some $0<x\leq 1$ and $V=(0,1,1)$. In either case, $U\in H_{++}\cap H_{+-}$, and $V\in H_{++}\cap H_{-+}$, so $P$ contains at least one element on every face of $H$. Therefore, by (\ref{oppitem}), all points of $P$ lying on $H$ have $z$-coordinate at least $1$. Further, by the same reasoning as above, there are at most two points of $P$ on each face, and the only way two points of $P$ can lie on the same face is if they lie on opposite sides of the boundary, as with $U$ and $V$. In particular, at most four points of $P$ lie on $H$, and hence $P$ contains at most five points in total.

%\noindent \textbf{Subcase 2(i):} Only one of (a) or (b) holds.

%\noindent We can assume without loss of generality that (a) holds and (b) does not, in which case $z_1>1$, and hence $z_2=1$. Further, $V$ lies on the boundary that is the intersection of $H_{++}$ and $H_{-+}$, so again by Lemma \ref{oppface}, any points in $P$ lying on $H_{+-}$ must have $z$-coordinate equal to $1$ and be $\ell^1$-distance $2$ from $U=(1,0,1)$, which leaves $X=(0,-1,1)$ as the only qualifying point. If $X\neq W$, then $W\in H_{--}$ is $\ell^1$-distance $2$ from $X$, which leaves $W=(-1,0,1)$ as the only option. We have now exhausted all faces of $H$, showing that if $P$ falls into Subcase 2(i), then it contains at most five points.

%\noindent \textbf{Subcase 2(ii):} Both (a) and (b) hold.

\noindent \textbf{Proof for Case 3:} This case gets a bit stickier, because, as some trial and error reveals, there are a variety of possible arrangements of three points on a single face of $H$ that are all separated by $\ell^1$-distance $2$. 

\noindent Focusing on $H_{++}$ for the sake of exposition, we see that our desired configuration of $\{(1,0,1), (0,1,1), (0,0,2)\}$ is merely one member of a family of arrangements obtained from the following process: 

\begin{itemize} \item Choose $x_0,y_0,z_0\geq 0$ with $x_0+y_0+z_0\leq 1$, and let $\alpha=1-(x_0+y_0+z_0)$ \\ \item Starting from $(x_0,y_0,z_0)$, construct a point by adding $1$ to one coordinate and $\alpha$ to another coordinate (so the coordinates add to $2$), then produce two additional points in a similar way by rotating the original choice of coordinates. For example, the initial choice of $U=(x_0+1,y_0+\alpha,z_0)$ uniquely determines the  two additional points $V=(x_0,y_0+1,z_0+\alpha)$ and $W=(x_0+\alpha,y_0,z_0+1)$. All of these points lie on $H_{++}$, and by (\ref{sameitem}) they are all separated by $\ell^1$-distance $2$. In fact, the only other possible set of three points yielded by this process (up to labeling) is $U=(x_0+1,y_0,z_0+\alpha)$, $V=(x_0+\alpha, y_0+1, z_0)$, and $W=(x_0,y_0+\alpha, z_0+1)$. For additional clarity, a specific example is $x_0=0.1$, $y_0=0.3$, $z_0=0.4$, hence $\alpha=0.2$, which could yield the three-point arrangements $\{(1.1,0.5,0.4),(0.1,1.3,0.6), (0.3,0.3,1.4)\}$ or $\{(1.1,0.3,0.6),(0.3,1.3,0.4),(0.1,0.5,1.4)\}$.

\end{itemize}

\noindent We hope to demystify the situation by arguing that the arrangements discussed above  are in fact the only possible arrangements. To this end, after reflections, we can assume $P$ contains three points $U,V,W\in H_{++}$, and we settle Case 3 with the following steps:

\begin{itemize} \item \textbf{Step 1 :} Show that $U,V,W$ take the form discussed above. In particular, after specifying the minimum values of each coordinate and a single point, the second and third points are uniquely determined, hence there cannot be a fourth point in $P\cap H_{++}$. \\

\item \textbf{Step 2:} Show that $P$ can contain at most one point in $(H_{+-}\cup H_{-+})\setminus H_{++}$ before necessarily reducing to Case 2. This means that any hypothetical fifth point of $P\cap H$ necessarily lies on $H_{--}$, which itself reduces the argument back to Case 2.\\

\end{itemize}

\noindent \textbf{Step 1:} Let $x_0$, $y_0$, and $z_0$ be the minimum $x$, $y$, and $z$-coordinates, respectively, attained by $U$, $V$, and $W$. In what follows, we repeatedly appeal to (\ref{sameitem}), which tells us that for every pair of points in $\{U,V,W\}$, the maximum coordinate distance is exactly $1$. In particular, the maximum $x$, $y$, and $z$-coordinates attained by $U$,$V$, and $W$ are at most $x_0+1$, $y_0+1$, and $z_0+1$, respectively, and we begin by arguing that this inequality must be equality in all three coordinates.

\noindent Suppose that this inequality is strict in at least one coordinate. By permuting coordinates and relabeling points we may assume that $U=(x_0,y,z)$, and neither of $V$ and $W$ has $x$-coordinate $x_0+1$. Therefore, the maximum coordinate distance of $1$ required by (\ref{sameitem}) must occur in either the $y$ or $z$-coordinates, and since $x_0$ is the minimum $x$-coordinate,  $V$ and $W$ must both take one of the following forms:   $(x_0+\alpha,y-1, z+(1-\alpha))$ for some $0\leq\alpha< 1$, or $(x_0+\beta, y+(1-\beta),z-1)$ for some $0\leq \beta <1$. However, no combination of these choices for $V$ and $W$ have a maximum coordinate distance of $1$ from each other, so this arrangement is impossible. Therefore, all the maxima $x_0+1$, $y_0+1$, and $z_0+1$ are indeed achieved. For the remainder of the proof, we will refer to the respective coordinate values $x_0$, $y_0$, and $z_0$ as \textit{minimum coordinates}, and we will similarly refer to the respective coordinate values $x_0+1$, $y_0+1$, $z_0+1$ as \textit{maximum coordinates}. We complete step one by considering the following three subcases.

\begin{itemize} \item \textbf{Subcase A:} Two maximum coordinates appear simultaneously in a single point. \\

\noindent Since all the points have $\ell^1$-norm $2$, this subcase necessitates that $x_0=y_0=z_0=0$, and we assume without loss of generality that $U=(1,1,0)$. Since the minimum $x$ and $y$ coordinates of $0$ must be attained, $\{V,W\}$ contains points of the form $(x,0,2-x)$ and $(0,y,2-y)$, respectively, for some $0\leq x,y\leq 1$. However, since the maximum $z$-coordinate is $1$, the only admissible choices are $x=y=1$, hence the three points are $(1,1,0)$, $(0,1,1)$, and $(1,0,1)$, which take the required form with $\alpha=1$. \\

\item \textbf{Subcase B:} Two minimum coordinates appear simultaneously in a single point.\\

\noindent Assume without loss of generality that $U=(x_0,y_0,z)$. Since $x_0$ and $y_0$ are minimum coordinates, each of $V$ and $W$ must take the form $(x_0+\alpha, y_0+\beta, z-(\alpha+\beta))$ for some $\alpha,\beta\geq 0$, and by (\ref{sameitem}) we must have $\alpha+\beta=1$.   Further, since the $z$-coordinate of both $V$ and $W$ is $z-1$ (which is hence the minimum coordinate $z_0$), the maximum coordinate distance of $1$ must occur in the first two coordinates, meaning that $\{V,W\}=\{(x_0+1,y_0,z_0),(x_0,y_0+1,z_0)\}$. In particular, the arrangement takes the required form with $\alpha=0$.   \\

\item \textbf{Subcase C:} Exactly one minimum coordinate and one maximum coordinate occurs in each point.\\

\noindent  After permuting coordinates and relabeling points we assume $U=(x_0+1,y_0+\alpha,z_0)$ where $0<\alpha=1-(x_0+y_0+z_0)<1$. In order to meet the subcase conditions, have a maximum coordinate distance of $1$ from $U$, and have $\ell^1$-norm $2$, the options for $V$ and $W$ are  $(x_0, y_0+1, z_0+\alpha)$, $(x_0, y_0+\alpha, z_0+1)$, and $(x_0+\alpha, y_0, z_0+1)$. Of these three possibilities, there is only one pair that are separated by $\ell^1$-distance $2$ from each other, hence $\{V,W\}=\{(x_0, y_0+1, z_0+\alpha),(x_0+\alpha, y_0, z_0+1)\}$, as required.

\end{itemize}

\noindent \textbf{Step 2:} Suppose $P$ contains a point $Q\in H_{-+}\setminus H_{++}$ (the argument is completely analogous for $Q\in H_{+-}\setminus H_{++}$). By (\ref{nitem}), in order for $Q$ to be separated from $U=(x_0+1,y_0+\alpha, z_0)$ by $\ell^1$-distance $2$, we must have $x_0+1\leq 1$, and hence $x_0=0$. In particular, $V=(0,y_0+1, z_0+\alpha) \in P \cap (H_{-+}\cap H_{++})$, so $P$ contains at least two points on $H_{-+}$. This means that, in order to avoid reducing to Case 2, $P$ cannot contain any elements of $H_{+-}$. 

\noindent If instead $P$ contains a second point $R\in H_{-+}\setminus H_{++}$, hence a third point in $H_{-+}$, then we fall back to our previous analysis of three points on a single face, adapted by taking negatives of all $x$-coordinates. In particular, because $V$ has $x$-coordinate $0$, which minimizes the $x$-coordinate in absolute value among the points in $P\cap H_{-+}$, either $Q$ or $R$ must maximize the $x$-coordinate in absolute value and have $x$-coordinate $-1$. Assuming $Q$ has $x$-coordinate $-1$, in order for $Q$ to be separated from $U=(1,y_0+\alpha,z_0)$ by $\ell^1$-distance $2$, we must have $Q=(-1,y_0+\alpha,z_0)$. In order for $\{V,Q,R\}$ to meet the required form for three points of $P$ on $H_{-+}$ established in Step 1, we must have $R=(-\alpha,y_0,z_0+1)$. However, in this case we see that $\norm{U-R}_1=2+2\alpha=2$, hence $\alpha=0$, which contradicts the assumption that $R\notin H_{++}$.
\end{proof}

%\noindent \textbf{Note that this is a proof in a context that is no longer equivalent to the $\ell^{\infty}$-norm.}

\section{Conditional Results in Higher Dimensions} \label{hdsec}

In the remainder of our discussion, we use the terms \textit{$\ell^1$-sphere} and \textit{$\ell^1$-ball} as before, defined analogously to regular spheres and balls in $\R^d$, with the usual distance replaced by $\ell^1$-distance. In an effort to establish results in higher dimensions, we make the following observations, heavily inspired by our journey thus far:

\begin{enumerate}[(a)] \item \label{capt} As noted at the beginning of Section \ref{3dsec}, our proof of Theorem \ref{opt} does not naturally generalize to higher dimensions, because in dimension $d\geq 3$, it is not necessarily the case that if the largest $\ell^1$-distance determined by a finite set $P\subseteq \R^d$ is $\lambda$, then $P$ is contained in a closed $\ell^1$-ball of diameter $\lambda$. However, the argument in Lemma \ref{remove} does generalize to all dimensions: the distance $\lambda$ can be removed from an $\ell^1$-ball of diameter $\lambda$ by removing the closed upper $\ell^1$-hemisphere. In particular, if we somehow could capture our set inside such a ball, then by mimicking the proof of Theorem \ref{opt}, the problem is reduced to determining maximal configurations of points arranged on single closed upper $\ell^1$-hemisphere, which would then facilitate an induction on the number of distinct distances. \\ 

\item \label{ballitem} Suppose $P \subseteq \R^d$ is a finite set determining at most $k$ distinct $\ell^1$-distances, with largest $\ell^1$-distance $\lambda$. By translating and scaling, we can assume that $\lambda=2k$ and the ``southernmost point" of $P$, minimizing the $x_d$ coordinate, is $-ke_d$, where $e_d$ is the $d$-th standard basis vector. An enticing observation, particularly in juxtaposition with (\ref{capt}), is the following: if $ke_d$ is also in $P$, then, since $2k$ is the largest $\ell^1$-distance, $P$ is contained in the intersection of the closed $\ell^1$-ball of radius $2k$ centered at $-ke_d$ and the closed $\ell^1$-ball of radius $2k$ centered at $ke_d$, which is conveniently the closed $\ell^1$-ball of radius $k$ centered at the origin. \\

\item \label{hyper} Inspired by the simplicity of Case 1 in the proof of Theorem \ref{singdist}, we see that if $U$ lies on an upper $\ell^1$-hemisphere $H\subseteq \R^d$, then the $\ell^1$-distance between $U$ and the ``north pole" of $H$ is determined entirely by the $x_d$-coordinate of $U$. More specifically, if $H$ is the closed upper $\ell^1$-hemisphere of radius $k$ centered at the origin and $U=(x_1,\dots,x_d)\in H$, then $$\norm{U-ke_d}_1=|x_1|+\cdots+|x_{d-1}|+k-x_d=2(k-x_d). $$ In particular, if $ke_d\in P$ and $P$ determines only $k$ distinct $\ell^1$-distances $\{\lambda_i\}_{i=1}^k$, then the points of $(P\cap H)\setminus ke_d$ are restricted to the hyperplanes $\{x_d=c_i\}$ for $1\leq i \leq k$., where $c_i=k-\lambda_i/2$. Further, the intersection of $H$ with the hyperplane $\{x_d=c_i\}$ is $$\{(x_1,\dots,x_{d-1},c_i): |x_1|+\cdots+|x_{d-1}|=k-c_i\},$$ which is a copy of the $\ell^1$-sphere of radius $k-c_i$ centered at the origin in $\R^{d-1}$. This would allow us to analyze $P\cap H$ by inducting on dimension, analogous to the invocation of Theorem \ref{opt} during Case 1 in the proof of Theorem \ref{singdist}.

\end{enumerate}

\noindent These three items combine to a clear aspirational reality: given a finite set $P\subseteq \R^d$ that determines at most $k$ distinct $\ell^1$-distances, the largest of which is $\lambda$, letting $H$ denote the closed upper $\ell^1$-hemisphere of radius $\lambda$ centered at the ``southernmost" point of $P$, we could fully adapt the proof of Theorem \ref{opt} and induct on both $d$ and $k$, if only we could assume that the ``north pole" of H is also in $P$. If we were considering the usual Euclidean distance, this would be no obstruction at all, as we could rotate our set and assume without loss of generality that the largest distance $\lambda$ occurs parallel to the $x_d$-axis. However, since $\ell^1$-distance is not invariant under rotation, we require an additional assumption to establish a conditional version of Conjecture \ref{optcon}. The following definition, conjecture, and theorem fully formalize this conditional result, after which we conclude our discussion. 

\begin{definition} Given $P\subseteq \R^d$ and an $\ell^1$-distance $\lambda>0$, we say that $\lambda$ \textit{occurs in an axis-parallel direction} if there exists $x\in P$ and $1\leq i \leq d$ such that $x+\lambda e_i\in P$, where $e_i$ is the $i$-th standard basis vector. Further, if $P$ is bounded, we say that $P$ is \textit{axis-parallel} if the largest $\ell^1$-distance determined by $P$ occurs in an axis-parallel direction.
\end{definition}

\begin{conjecture} \label{axisp} Suppose $d,k\in \N$. If $P$ is of maximal size amongst subsets of $\R^d$ determining at most $k$ distinct $\ell^1$-distances, then $P$ is axis-parallel. The same holds within the class of sets contained in an $\ell^1$-sphere in $\R^d$.
\end{conjecture}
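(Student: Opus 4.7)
The plan is to attempt Conjecture \ref{axisp} by induction on $d$, with Theorem \ref{opt} handling the base case $d = 2$: there $\Lambda_2(k)$ contains the pair $(\pm k, 0)$, which realizes the largest distance $2k$ in an axis-parallel direction, and any $\ell^1$-similar copy inherits this feature. For the inductive step, suppose $P \subseteq \R^d$ is of maximal size subject to determining at most $k$ distinct $\ell^1$-distances, and let $\lambda$ be the largest distance, attained between $u, v \in P$. Using $\ell^1$-similarity, translate so that the midpoint of $u$ and $v$ is the origin, then reflect and permute coordinates so that the half-difference $w = (v-u)/2$ has coordinates $w_1 \geq w_2 \geq \cdots \geq w_d \geq 0$ with $\sum w_i = \lambda/2$. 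The goal is then to derive a contradiction unless $w_1 = \lambda/2$.

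Because $\lambda$ is the maximum distance, $P$ is contained in the intersection $R_w = B_\lambda(u) \cap B_\lambda(v)$. When $w$ is axis-parallel, $R_w$ collapses to a closed $\ell^1$-ball of radius $\lambda/2$ centered at the origin, which is the natural habitat for (a scaled copy of) $\Lambda_d(k)$. For general $w$, $R_w$ is a less symmetric convex polytope. The crucial step I would pursue is a \emph{symmetrization principle}: given any configuration $P \subseteq R_w$ of at most $k$ distinct $\ell^1$-distances, construct a corresponding configuration $P' \subseteq R_{(\lambda/2)e_1}$, with axis-parallel diameter, of the same or larger cardinality, with strict inequality unless $w$ is already axis-parallel. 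Combined with the extremality of $P$, this would force $w = (\lambda/2) e_1$. To build such a symmetrization, one might slice $R_w$ by hyperplanes of the form $\{x_1 = c\}$, apply the inductive hypothesis to each slice (each of which lives on a lower-dimensional $\ell^1$-sphere, by observation (\ref{hyper}) of Section \ref{hdsec}), and then realign the slices along the $e_1$-axis. Proving the sphere version of the conjecture first is natural, as it isolates the combinatorial core and is directly amenable to slicing.

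The main obstacle is that $\ell^1$-distance is not rotation-invariant, so no clean geometric transformation aligns $w$ with $e_1$ while preserving the distance set, and the shape of $R_w$ genuinely depends on $w$. Any slicing strategy faces the further issue that inter-slice $\ell^1$-distances depend on how slices are stitched together, so axis-parallelism within each slice (guaranteed by induction) does not automatically yield global axis-parallelism. This is precisely where the Euclidean intuition breaks down and where the conjecture acquires its depth. A promising but technically demanding route would be to prove a sharp volumetric comparison between $R_w$ for axis-parallel versus diagonal $w$, and translate it into a lattice-point counting inequality benchmarked against $|\Lambda_d(k)|$ via Theorem \ref{ffthm}; the fact that Conjecture \ref{optcon} has resisted direct analysis for $d\geq 3$ beyond $k = 1$ strongly suggests that a genuinely new ingredient -- perhaps from polytopal discrete geometry or the theory of equilateral sets alluded to in our remark on \cite{Band, Koolen} -- will be required to close the argument.
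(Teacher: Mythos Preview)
The statement you are attempting is \emph{Conjecture}~\ref{axisp}: the paper does not prove it and explicitly leaves it open. What the paper does prove is Theorem~\ref{condinc}, namely that Conjecture~\ref{axisp} implies Conjecture~\ref{optcon}; but Conjecture~\ref{axisp} itself is posed precisely because the authors could not remove the axis-parallel hypothesis. So there is no ``paper's own proof'' to compare against.

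Your proposal is accordingly not a proof either, and you acknowledge as much in your final paragraph. The outline you sketch---induction on $d$, normalizing the diameter pair, slicing $R_w$ and invoking the sphere version on lower-dimensional slices---is a reasonable heuristic, and you correctly isolate the obstruction: the $\ell^1$-norm is not rotation-invariant, so there is no isometry carrying a generic diameter direction $w$ to $e_1$, and the ``symmetrization principle'' you want is exactly the missing ingredient. The further observation that inter-slice distances are not controlled by within-slice axis-parallelism is also on point and is essentially why the paper's inductive machinery in Section~\ref{hdsec} needs the axis-parallel assumption to get started. In short, your write-up is a fair summary of why the conjecture is hard, but it does not advance beyond what the paper already says; a genuine proof would need to supply the symmetrization or volumetric comparison you allude to, and neither you nor the paper does so.
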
 

\begin{theorem}\label{condinc} Conjecture \ref{axisp} implies Conjecture \ref{optcon}. 

\end{theorem}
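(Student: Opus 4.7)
The plan is to execute a double induction on $d$ and $k$, mirroring the proof of Theorem \ref{opt}, with Conjecture \ref{axisp} supplying the axis-parallel information that was immediate in dimension two via Lemma \ref{SD}. Since the slicing step described below reduces a $d$-dimensional hemisphere problem to a collection of sphere problems in $\R^{d-1}$, I would simultaneously establish a sphere analog of Conjecture \ref{optcon}: if $P$ is contained in any $\ell^1$-sphere in $\R^d$ and determines at most $k$ distinct $\ell^1$-distances, then $|P|\leq S_d(k)$, where $S_d(k)=|\{n\in\Z^d:\norm{n}_1=k\}|$, with equality characterizing $P$ as $\ell^1$-similar to the integer lattice sphere of radius $k$. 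The base cases $d=1$ (Lemma \ref{line}), $k=0$ (trivial), and $d=2$ (Theorem \ref{opt}) are handled directly.

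For the ball version at $(d,k)$, take $P\subseteq\R^d$ of maximal size with at most $k$ distinct $\ell^1$-distances. By Conjecture \ref{axisp}, the largest $\ell^1$-distance $\lambda$ occurs in an axis direction, and up to translation, scaling, coordinate permutation, and reflections, I may assume $\lambda=2k$ and $\pm ke_d\in P$. Observation (b) of Section \ref{hdsec} then places $P$ inside the closed $\ell^1$-ball $B$ of radius $k$ centered at the origin. A direct generalization of Lemma \ref{remove} shows that $T=P\setminus H$, where $H$ is the closed upper hemisphere of $B$, determines at most $k-1$ distinct $\ell^1$-distances, so $|T|\leq|\Lambda_d(k-1)|$ by the inductive hypothesis at $(d,k-1)$. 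To bound $|P\cap H|$, observation (c) forces $(P\cap H)\setminus\{ke_d\}$ to lie in $k$ hyperplanes $\{x_d=c_i\}$, one for each of the $k$ distances $\lambda_i$ of $P$, via $c_i=k-\lambda_i/2$; the slice at $c_i$ lies on an $\ell^1$-sphere of radius $r_i=\lambda_i/2$ in $\R^{d-1}$. The key observation is that distances within this slice are bounded by its diameter $2r_i=\lambda_i$, so the slice determines at most $i$ distinct distances (those from $\{\lambda_1,\dots,\lambda_i\}$). The sphere version of the inductive hypothesis in $\R^{d-1}$ then gives a slice-size bound of $S_{d-1}(i)$, yielding
\[
|P\cap H|\leq 1+\sum_{i=1}^{k} S_{d-1}(i)=\sum_{j=0}^{k} S_{d-1}(j)=|\Lambda_d(k)\cap H|.
\]
Combining with the identity $|\Lambda_d(k)\cap H|=|\Lambda_d(k)|-|\Lambda_d(k-1)|$, which follows from the recurrence of Theorem \ref{prop}(iii), we obtain $|P|\leq|\Lambda_d(k)|$. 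The equality characterization propagates through each step: $|P|=|\Lambda_d(k)|$ forces $T$ to be $\ell^1$-similar to $\Lambda_d(k-1)$ and each slice to achieve its $S_{d-1}$-size maximum, and gluing without introducing extra distances forces $P$ to be $\ell^1$-similar to $\Lambda_d(k)$.

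I expect the main obstacle to be the sphere version of the inductive claim. Although Part 2 of Conjecture \ref{axisp} hands over the axis-parallel input for sets on a sphere, proving $|P|\leq S_d(k)$ requires adapting the sequence of Lemmas \ref{SD}, \ref{remove}, and \ref{semicirc} from a ball to a sphere, most likely by splitting the sphere into two hemispheres along an axis-parallel diameter and recursing via an analogous slicing argument, where each slice again sits on a lower-dimensional sphere. A secondary subtlety is the equality characterization: Conjecture \ref{axisp} only pinpoints the direction of the largest distance, so propagating $\ell^1$-similarity from $T$, each slice, and the north pole up to the full configuration without creating extraneous distances requires bookkeeping that extends the final paragraph of the proof of Theorem \ref{opt} to higher dimensions.
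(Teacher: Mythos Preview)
Your proposal is correct and follows essentially the same architecture as the paper's proof: a double induction on $d$ and $k$ built around three interlocking statements---the paper calls them Opt$(d,k)$, S-Opt$(d,k)$, and H-Opt$(d,k)$, corresponding to your ball claim, your sphere claim with bound $S_d(k)=|\Lambda_d(k)\setminus\Lambda_d(k-2)|$, and your hemisphere slicing bound---with Conjecture~\ref{axisp} invoked at each stage to force the maximal distance into an axis direction. Your slicing argument, the counting identity $|\Lambda_d(k)\cap H|=\sum_{j=0}^k S_{d-1}(j)$, and the anticipated difficulty in the sphere step (where one must also rule out $\lambda_k<2k$, i.e., the possibility that a maximal sphere configuration sits inside a proper hemisphere) all match the paper's treatment.
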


\begin{proof} We proceed via two inductions, one on the dimension $d$ and another on the number of distinct $\ell^1$-distances $k$. We streamline the argument by defining the following propositions for each $d\in \N$ and each nonnegative integer $k$:

\begin{itemize} \item Opt$(d,k)$: $\Lambda_d(k)$ is the unique set, up to $\ell^1$-similarity, of maximal size amongst subsets of $\R^d$ determining at most $k$ distinct $\ell^1$-distances. Conjecture \ref{optcon} is precisely the statement that Opt$(d,k)$ holds for all $d,k\in \N$. \\

\item S-Opt$(d,k)$: $\Lambda_d(k)\setminus \Lambda_d(k-2)$ is the unique set, up to $\ell^1$ similarity, of maximal size amongst sets contained in an $\ell^1$-sphere in $\R^d$ determining at most $k$ distinct $\ell^1$-distances. For $k=0$ or $1$, we take the convention that $\Lambda_d(-1)=\Lambda_d(-2)=\emptyset$.\\

\item H-Opt$(d,k)$: Let $H$ denote the closed upper $\ell^1$-hemisphere of radius $k$ centered at the origin in $\R^d$. If $ke_d\in E\subseteq H$ and $E$ determines at most $k$ distinct $\ell^1$-distances, the largest of which is $2k$, then $|E|\leq |\Lambda_d(k)\cap H|$, and $|E|=|\Lambda_d(k)\cap H|$ if and only if $E=\Lambda_d(k)\cap H$.\\

%$\Lambda_d(k)\cap H$ is the unique set, up to $\ell^1$-similarity, of maximal size amongst sets contained in a closed $\ell^1$-hemisphere in $\R^d$ determining at most $k$ distinct $\ell^1$-distances. In particular, if $E\subseteq H$ is such an optimal set, and the largest $\ell^1$ distance determined by $E$ is $2k$, then $E=\Lambda_d(k)\cap H$.\\

\end{itemize}

\noindent For the necessary base cases, we note that  S-Opt$(1,k)$ and H-Opt$(1,k)$ trivially hold for all $k\in \N$ as $\ell^1$-spheres and $\ell^1$-hemispheres in $\R$ contain just two points and one point, respectively. Also, Opt$(d,0)$ holds trivially for all $d\in \N$ because a set determining no $\ell^1$-distances contains at most a single point. Under the assumption that Conjecture \ref{axisp} holds, we verify Conjecture \ref{optcon} by establishing the following implications:

\begin{enumerate} \item \label{SH} S-Opt$(d-1,k)$ for all $k\in \N \implies$ H-Opt$(d,k)$ and S-Opt$(d,k)$ for all $k\in \N$, so S-Opt$(d,k)$ and H-Opt$(d,k)$ hold for all $d,k\in \N$. \\

\item \label{OI} Opt$(d,k-1)$ and H-Opt$(d,k) \implies$ Opt$(d,k)$, so Opt$(d,k)$ holds for all $d,k\in \N$, as required.\\
 
\end{enumerate}

\noindent \textbf{Proof of (\ref{OI}):} Fix $d,k\in \N$, and suppose Opt$(d,k-1)$ and H-Opt$(d,k)$ hold. Suppose $P\subseteq \R^d$ determines at most $k$ distinct $\ell^1$-distances, and has maximal size amongst sets with this property. By scaling we can assume that the largest $\ell^1$-distance determined by $P$ is $2k$, which by Conjecture \ref{axisp} we know occurs in an axis-parallel direction. After permuting coordinates and translating, we can assume that $-ke_d,ke_d\in P$. As noted in (\ref{ballitem}), this implies that $P$ is contained the closed $\ell^1$-ball of radius $k$ centered at the origin. To verify this, suppose $U=(x_1,\dots,x_d)\in P$ with $\norm{U}_1 >k$. If $x_d\geq0$, then $\norm{U-(-ke_d)}_1=\norm{U}_1+k>2k$, while if $x_d\leq 0$, the same holds for the $\ell^1$-distance between $U$ and $ke_d$, contradicting the fact that $2k$ is the largest $\ell^1$-distance determined by $P$.

\noindent As noted in (\ref{capt}), the $\ell^1$-distance $2k$ can be eliminated from $P$ by removing the points in $P\cap H$, where $H$ is the closed upper $\ell^1$-hemisphere of radius $k$ centered at the origin. This is because, within a closed $\ell^1$-ball of radius $k$, the $\ell^1$-distance $2k$ only occurs between pairs of points on opposite faces of the boundary. By H-Opt$(d,k)$, we know that $|P\cap H|\leq |\Lambda_d(k)\cap H|$, and further $|P\cap H|= |\Lambda_d(k)\cap H|$ if and only if $P\cap H=\Lambda_d(k)\cap H$, in which case the $\ell^1$-distances determined by $P$ are $2,4,\dots,2k$.

\noindent Because the $\ell^1$-distance $2k$ does not occur in $P\setminus H$, we have that $P\setminus H$ determines at most $k-1$ distinct $\ell^1$-distances. By Opt$(d,k-1)$, we know that $|P\setminus H|\leq |\Lambda_d(k-1)|$, and further $|P\setminus H|= |\Lambda_d(k-1)|$ if and only if $P\setminus H$ is $\ell^1$-similar to $\Lambda_d(k-1)$. In order for both $P\cap H$ and $P\setminus H$ to attain their maximum possible sizes, the $\ell^1$-distances determined by $P\setminus H$ must be $2,4,\dots,2k-2$. In this case, since an $\ell^1$-similar copy of $\Lambda_d(k-1)$ is uniquely determined by its ``south pole" and its largest distance, we know that $P\setminus H$ must be $\Lambda_d(k-1)$ shifted down by $1$. In other words, \begin{align*} P\setminus H&= \{(x_1,\dots,x_{d}-1)\in \Z^d: |x_1|+\cdots+|x_d|\leq k-1, \ x_1+\dots+x_d\equiv k-1 \ (\text{mod }2)\} \\ &=\{(x_1,\dots,x_{d})\in \Z^d: |x_1|+\cdots+|x_d+1|\leq k-1, \ x_1+\dots+x_d\equiv k \ (\text{mod }2)\}.
\end{align*} The latter description ensures that $P\setminus H \subseteq \Lambda_d(k) \setminus H$, and conversely, if $U=(x_1,\dots,x_d) \in \Lambda_d(k)\setminus H$, then either $x_d<0$ or $\norm{U}_1\leq k-2$. In either case $|x_1|+\cdots+|x_d+1|\leq k-1$, and hence $U\in P\setminus H$. Bringing everything together, we have that if $P\cap H$ and $P\setminus H$ both attain their maximum possible size, then $P\cap H= \Lambda_d(k)\cap H$ and $P\setminus H=\Lambda_d(k)\setminus H$, hence $P=\Lambda_d(k)$, so Opt$(d,k)$ holds.

\noindent \textbf{Proof of (\ref{SH}):} \noindent Fix $d\geq 2$, suppose S-Opt$(d-1,k)$ holds for all $k\in \N$, and fix $k\in \N$. Suppose $P\subseteq \R^d$ is contained in the $\ell^1$-sphere $S$ of radius $k$ centered at the origin, and that $P$ has maximal size amongst all such sets determining at most $k$ distinct $\ell^1$-distances. To establish S-Opt$(d,k)$, we must show that $P=\Lambda_d(k)\setminus \Lambda_d(k-2)$. Thanks to our inductive hypothesis, we can assume $P$ determines exactly $k$ distinct $\ell^1$-distances, not fewer, and we denote those $\ell^1$-distances by $\lambda_1<\cdots<\lambda_k$. By the $\ell^1$-sphere component of Conjecture \ref{axisp}, we know that $\lambda_k$ occurs in an axis-parallel direction. By permuting coordinates, we assume that $\lambda_k$ occurs in the last coordinate direction, in other words $(x_1,\dots,x_d),(x_1,\dots,x_d+\lambda_k)\in P$ for some $x_1,\dots,x_d\in \R$ with $|x|+\cdots+|x_d|=|x_1|+\cdots+|x_d+\lambda_k|=k$, which in particular forces $x_d=-\lambda_k/2$ and $|x_1|+\cdots+|x_{d-1}|=k-\lambda_k/2$. This transformation is allowable because our end goal, $\Lambda_d(k)\setminus \Lambda_d(k-2)$, is invariant under coordinate permutation.  

\noindent We argue (informally for the moment) that the only reasonable choice is $\lambda_k=2k$ and $x_1=\cdots=x_{d-1}=0$, meaning $ke_d,-ke_d\in P$. This is because, since $\lambda_k$ is the largest $\ell^1$-distance, $P$ is contained in the intersection of the closed $\ell^1$-balls of radius $\lambda_k$ centered at $(x_1,\dots,x_{d-1},-\lambda_k/2)$ and $(x_1,\dots,x_{d-1},\lambda_k/2)$, respectively, and this intersection is the $\ell^1$-ball of radius $\lambda_k/2$ centered at $(x_1,\dots,x_{d-1},0)$. However, $P$ is also contained in $S$, so if it is not the case that $\lambda_k=2k$, then $P$ would in fact be contained in the intersection of an $\ell^1$-sphere with a closed $\ell^1$-ball of a smaller radius, which is at most a closed $\ell^1$-hemisphere. The idea that a maximal subset of an $\ell^1$-sphere determining at most $k$ distinct $\ell^1$-distances could actually be contained in a closed $\ell^1$-hemisphere is intuitively suspect, and we return to this issue near the end of the proof. For now, we assume $-ke_d,ke_d\in P$.

\noindent We let $H$ denote the closed upper $\ell^1$-hemisphere of $S$, and we establish H-Opt$(d,k)$ along the way. As discussed in (\ref{hyper}), all the points of $(P\cap H)\setminus ke_d$  have $x_d$-coordinates in the list $c_1> c_2 > \cdots > c_k$, where $c_i=k-\lambda_i/2$. For each $c_i$, the points of $H$ with $x_d$-coordinate equal to $c_i$ take the form $(x_1,\dots,x_{d-1},c_i)$ where $|x_1|+\cdots+|x_{d-1}| = k-c_i$, and we refer to the set of such points as $S_i$. With regard to $\ell^1$-distances, $S_i$ is equivalent to an $\ell^1$-sphere in $\R^{d-1}$, centered at the origin with radius $k-c_i$. All $\ell^1$-distances determined by $P\cap S_i$ are at most $2(k-c_i)=\lambda_i$, so $P\cap S_i$ determines at most $i$ distinct $\ell^1$-distances. By our inductive hypothesis, $|P\cap S_i| \leq |\Lambda_{d-1}(i)|-|\Lambda_{d-1}(i-2)|$, with equality holding if and only if the projection $P\cap S_i$ onto the first $d-1$ coordinates is $\ell^1$-similar to $\Lambda_{d-1}(i)\setminus \Lambda_{d-1}(i-2)$, so in particular  $\lambda_1,\dots,\lambda_i$ form an arithmetic progression.  Since $k-z_i=\lambda_i/2$ and $\lambda_k=2k$, this equality holds for all $1\leq i \leq k$ if and only if $P\cap S_i=\{(x,k-i): x\in \Lambda_{d-1}(i)\setminus \Lambda_{d-1}(i-2)\}$ for all $1\leq i \leq k$. Here we note that if it were not the case that $ke_d,-ke_d\in P$ as previously assumed, then $P\cap S_i$ would be, at most, equivalent to an $\ell^1$-hemisphere in $\R^{d-1}$, in which case our inductive hypothesis would prohibit it from having $|\Lambda_{d-1}(i)|-|\Lambda_{d-1}(i-2)|$ elements.

\noindent In summary, \begin{equation}\label{hemiub}|P\cap H| \leq \sum_{i=0}^{k} |\Lambda_{d-1}(i)|-|\Lambda_{d-1}(i-2)|,\end{equation} taking $\Lambda_{d-1}(-1)$ and $\Lambda_{d-1}(-2)$ to be empty, and equality holds if and only if $$P\cap H = \bigcup_{i=0}^k\left\{(x,k-i): x\in \Lambda_{d-1}(i)\setminus \Lambda_{d-1}(i-2)\right\}= \Lambda_d(k)\cap H,$$ which establishes H-Opt$(d,k)$. 

\noindent Further, $P\cap H$ can contain at most $\sum_{i=0}^{k-1} |\Lambda_{d-1}(i)|-|\Lambda_{d-1}(i-2)|$ points with $x_d>0$. Letting $H'$ denote the closed lower $\ell^1$-hemisphere of $S$, we employ the identical reasoning as above to yield the same upper bound (\ref{hemiub}) on $|P\cap H'|$, with equality holding if and only if $$P\cap H' = \bigcup_{i=0}^{k}\{(x,i-k): x\in \Lambda_{d-1}(i)\setminus \Lambda_{d-1}(i-2)\}=\Lambda_d(k)\cap H.$$ Further, $P\cap H'$ can contain at most $\sum_{i=0}^{k-1} |\Lambda_{d-1}(i)|-|\Lambda_{d-1}(i-2)|$ points with $x_d<0$. Putting all this together, we have \begin{align*}|P|&=|P\cap (H\setminus H')|+|P\cap (H' \setminus H)|+|P\cap H\cap H'| \\&\leq 2\left(\sum_{i=0}^{k-1} |\Lambda_{d-1}(i)|-|\Lambda_{d-1}(i-2)|\right)+\left(|\Lambda_{d-1}(k)|-|\Lambda_{d-1}(k-2)|\right), \end{align*} and equality holds if and only if $$ P= \bigcup_{i=-k}^k\left\{(x,i): x\in \Lambda_{d-1}(k-|i|)\setminus \Lambda_{d-1}(k-|i|-2)\right\} =\Lambda_d(k)\setminus \Lambda_d(k-2).$$ Therefore, S-Opt$(d,k)$ holds, and the induction on dimension is complete. \end{proof}

\noindent \textbf{Remark.} If one is specifically interested in dimension $d=3$, then, because we have fully resolved the problem in dimension $d=2$, no inductive hypothesis is needed for dimension, just for the number of $\ell^1$-distances. In other words, if Opt$(3,k-1)$ holds, then $\Lambda_3(k)$ is the unique set, up to $\ell^1$-similarity, of maximal size amongst axis-parallel subsets of $\R^3$ determining at most $k$ distinct $\ell^1$-distances. In particular, because Theorem \ref{singdist} tells us that O$(3,1)$ holds, we know that $\Lambda_3(2)$, which contains $19$ points and is pictured in Figure \ref{fig:sub2}, is uniquely optimal amongst axis-parallel sets determining only two $\ell^1$-distances. However, we cannot make the analogous claim for $\Lambda_3(3)$, because we cannot exclude the possibility of a non-axis-parallel set determining two $\ell^1$-distances that contains more than $19$ points (or that contains exactly $19$ points but is not $\ell^1$-similar to $\Lambda_3(2)$). This possibility disables our bridge from two $\ell^1$-distances to three, and exemplifies the need in assuming Conjecture \ref{axisp} if we wish to glean additional information in dimension $d\geq 3$. 

\noindent \textbf{Acknowledgements:}  This research was initiated during the Summer 2019 Kinnaird Institute Research Experience at Millsaps College. All authors were supported during the summer by the Kinnaird Endowment, gifted to the Millsaps College Department of Mathematics. At the time of submission, all authors except Alex Rice were Millsaps College undergraduate students. The authors would like to thank Alex Iosevich for his helpful references, and Tomasz Tkocz for alerting us to previous research done in the $k=1$ case. Finally, the authors would like to thank the anonymous referee for their encouraging comments and helpful recommendations, particular regarding Sections 5 and 6.

\end{document}